\newcommand{\Rb}[1]{{\mathbb{R}_{#1}}}
\newcommand{\DEQS}{\begin{eqnarray*}}
\newcommand{\EEQS}{\end{eqnarray*}}
\newcommand{\DEQSZ}{\begin{eqnarray}}
\newcommand{\EEQSZ}{\end{eqnarray}}
\newcommand{\NN}{{\mathbb{N}}}
\newcommand{\lk}{\left}
\newcommand{\lqq}{\lefteqn}
\newcommand{\rk}{\right}
\newcommand{\la}{{\langle}}
\newcommand{\ra}{{\rangle}}
\newcommand{\CU}{{{ \mathcal U }}}
\newcommand{\ep}{{\epsilon }}
\newcommand{\cl}{{{ \mathfrak{l} }}}
\newcommand{\ce}{{{ \mathfrak{e} }}}
\newcommand{\CA}{{{ \mathcal A }}}
\newcommand{\CS}{{{ \mathcal S }}}
\newcommand{\CB}{{{ \mathcal B }}}
\newcommand{\EE}{\mathbb{E}}
\newcommand{\PP}{\mathbb{P}}
\newcommand{\CM}{{{ \mathcal M }}}
\newcommand{\CF}{{{ \mathcal F }}}
\newcommand{\RR}{{\mathbb{R}}}
\newcommand{\bNN}{{\bar{\mathbb{N}}}}
\newcommand\del[1]{}
\newcommand\think[1]{}
\newcommand\new[1]{}
\newcommand\zus[1]{}
\newcommand\comd[1]{} 
\newcommand\Redd[1]{} 
\newcounter{g11}
\newenvironment{list-alpha}
{\begin{list} { (\alph{g11}):}
{\usecounter{g11}
\setlength{\leftmargin}{0.7cm}
\setlength{\topsep}{0.1cm}
\setlength{\itemsep}{0.0cm}
\setlength{\parsep}{0.1cm}
\setlength{\itemindent}{0.4cm}
\setlength{\parskip}{0.0cm}}}
{\end{list}}
\newcounter{gg1}
\newcounter{lil}
\numberwithin{equation}{section}
\begin{document}

\title[ Inequalities of the Ito stochastic integral  \today]{
Burkholder-Davis-Gundy type  Inequalities of the It\^o stochastic integral  with respect to L\'evy noise
on Banach spaces}
\thanks{This work was supported by the FWF-Project P17273-N12. I would also like to thank Brze{\'z}niak for discussions on this topic.}

\author{Erika Hausenblas}
\address{Department of Mathematics, University of Salzburg,
Hellbrunnerstr. 34, 5020 Salzburg, Austria} \email{
erika.hausenblas@sbg.ac.at}

\date{\today}

\maketitle

\begin{abstract}

The aim of this note is  to give some Burkholder-Davis-Gundy type  inequalities 
which are valid for the Ito stochastic integral with respect to Banach valued L\'evy noise.

\end{abstract}



\newtheorem{theorem}{Theorem}[section]
\newtheorem{assumption}{Assumption}[section]
\newtheorem{claim}{Claim}[section]
\newtheorem{lemma}[theorem]{Lemma}
\newtheorem{corollary}[theorem]{Corollary}
\newtheorem{example}[theorem]{Example}
\newtheorem{tlemma}{Technical Lemma}[section]
\newtheorem{definition}[theorem]{Definition}
\newtheorem{remark}[theorem]{Remark}
\newtheorem{proposition}[theorem]{Proposition}
\newtheorem{Notation}{Notation}
\renewcommand{\theNotation}{}

\renewcommand{\labelenumi}{\alph{enumi}.)}

%


%
\textbf{Keywords and phrases:} {Stochastic integral of jump type,
Poisson random measures, L\'evy process.}

\textbf{AMS subject classification (2002):} {Primary 60H15;
Secondary 60G57.}

\section{Introduction}

Let us assume that  $(S,\CS)$ {is a metric space with Borel $\sigma$ algebra $\CS$} and
$\tilde
\eta$ is a  time homogeneous compensated Poisson random measure
defined on  a filtered probability space $(\Omega;\CF;(\CF_t)_{0\le t<\infty};\PP)$
with intensity measure $\nu$ on $S$, to be specified later. Let us assume that
 $1< p\le 2$,  $E$ is a Banach space of martingale type $p$, see e.g.\ the Appendix of \cite{maxreg} for a definition.
We consider in the following the It\^o stochastic integral $I=\{ I(t), 0\le t<\infty\}$ driven by the compensated 
Poisson random measure $\tilde \eta$, i.e.\ 
\DEQS 
I(t   ) &=& \int_0^t \int_{ S}\xi(s;
                  x) \tilde \eta(dx;ds),
\EEQS
where
$\xi:[0,T]\times \Omega\times S \to E$ is
a progressively measurable  processes satisfying certain integrability conditions specified later. 
We are interested in Inequalities 
satisfied by the process $I$.
In particular, we will show  that for any $q=p ^ n$, $n\in\NN$, there exist constants $C$ and $\bar C$,
only depending on $E$, $p$ and $q$,  such that 
\DEQS
\EE \sup_{0\le s\le t} \lk|I(s)\rk|^q \le C\, \EE \lk( \int_0^t \int_{ S}\lk|\xi(s;
                  x)  \rk| ^ p  \eta(dx;ds)\rk)^\frac qp 
                  \\
                  \le \bar C\,\lk( \EE  \int_0^t \int_{ S}\lk|\xi(s;
                  x)  \rk|^q \nu(dx;ds) +   \EE \lk( \int_0^t \int_{ S}\lk|\xi(s;
                  x)  \rk| ^ p \nu(dx;ds) \rk) ^\frac qp\rk).
\EEQS
From this inequalities one can derive similar inequalities for martingales 
of pure jump type. To be more precise, let $X$ be a martingale, such that there exists a L\'evy process $L$ and
a progressively process $h:[0,\infty)\to  L(E,E)$, satisfying some integrabilities condition specified later, with
$$
X(t) = \int_0 ^ t h(s) \, dL(s),\quad t\ge 0.
$$
Then there exist constants $C>0$ and $\bar C>0$ such that 
\DEQS
\lqq{
\EE \sup_{0\le s\le t} \lk|X(s)\rk|^q \le  C\, \EE \lk( \sum_{0\le s\le t} \lk|\Delta_s X   \rk| ^ p \rk) ^\frac qp}
&&
\\ 
                                  & \le& \bar C\,\lk( \EE \sum_{0\le s\le t} \lk|\Delta_s X   \rk| ^q  +
   \EE \lk( \sum_{0\le s\le t}\EE[ \, \lk|\Delta_s X   \rk|  ^ p|\CF_{s-}]  \rk) ^\frac qp\rk),
\EEQS
where $\Delta_t X = X(t)-X(t-)$, $t>0$.

\begin{Notation}
Let $\NN_0:=\NN\cup\{0\}$ and $\bar\NN := \NN_0\cup\{\infty\}$.
By $\CM_I( S\times \mathbb{R}_+)$ we denote the family of all $\bar{\mathbb{N}}$-valued measures on
$(S\times \mathbb{R}_+,\CS\otimes \mathcal{B}({\mathbb{R}_+}))$.\del{ and
$\CM_{S\times \mathbb{R}_+} ^ {\bar{\mathbb{N}}}$ is the $\sigma$-field on $M_{S\times \RR_+} ^{ \bNN}$ generated by functions
$i_B:M\ni\mu \mapsto \mu(B)\in \bNN$, $B\in \CS\otimes \mathcal{B}({\mathbb{R}_+})$.}
By $\CM^+(S)$ we denote the set of all positive measures on $S$.
For any Banach space $Y$ and number $q\in[1,\infty)$, we denote by  $\mathcal{N}(\RR_+;Y)$ 
 the space of (equivalence classes) of
{progressively-measurable}  processes $\xi :\RR_+\times\Omega\to Y$ and by 
$\mathcal{M}^p (\RR_+;Y)$  the Banach space consisting of those $\xi \in \mathcal{N}(\RR_+;Y)$ for which
$
\mathbb{E}\int_0^\infty  \vert\xi (t)\vert^p_Y\,dt<\infty$.
\end{Notation}

\section{Main results}\label{sec_main}

Let us first introduce the notation of time homogeneous Poisson random measures over a filtered probability space.
\begin{definition}\label{def-Prm} 

Let $(S,\CS)$ be a measurable space and let $(\Omega,\CF,\PP)$ be a
probability space.\\ 
\noindent
A {\sl Poisson random measure} $\eta$
on $(S,\CS)$   over $(\Omega,\CF,\PP)$,  is a measurable function $\eta: (\Omega,\CF)\to ( \CM_I(S\times \RR_+),\CB(\CM_I(S\times \RR_+))$, where $\CB(\CM_I(S\times \RR_+)$ is the $\sigma$-field on $\CM_I(S\times \RR_+)$ generated by functions
$i_B:\CM_I(S\times \RR_+) \ni\mu \mapsto \mu(B)\in \bar{\mathbb{N}}$, $B\in \CS$, such that
\begin{trivlist}
\item[(i)] $\eta$ is independently scattered, i.e.\ if the sets
$ B_j \in   \CS\times \CB(\RR_+)$, $j=1,\cdots, n$ are pairwise disjoint,   then the random
variables $\eta(B_j)$, $j=1,\cdots,n $ are pairwise independent.
\item[(ii)] for each $B\in  \CS $,
 $\eta(B):=i_B\circ \eta : \Omega\to \bar{\mathbb{N}} $ 
 is a Poisson random variable with parameter\footnote{If  $\EE \eta(B) = \infty$, then obviously $\eta(B)=\infty$ a.s..} $\EE\eta(B)$.
\item[(iii)] for each $U\in \CS$, the $\bar{\mathbb{N}}$-valued processes $(N(t,U))_{t>0}$  defined by
$$N(t,U):= \eta((0,t]\times U), \;\; t>0$$
is $(\mathcal{F}_t)_{t\geq 0}$-adapted and its increments are independent of the past, i.e. if $t>s\geq 0$, then
$N(t,U)-N(s,U)=\eta((s,t]\times U)$ is independent of
$\mathcal{F}_s$.
\end{trivlist}
\end{definition}

\del{\begin{definition}
Let $(\Omega,\CF,(\CF_t)_{t\in\RR_+},\PP)$ be a filtered probability space and $\eta$ be a time homogeneous Poisson random measure 
over $(\Omega,\CF,\PP)$. We say that the filtration $(\CF_t)_{t\in\RR_+}$
is not anticipated to $\eta$, if
for any $t\in\RR_+$, $\CF_t$ is independent of $\CU_t := \sigma\lk\{ \eta(\cdot,(t,r]\cap \cdot),t<r<\infty\rk\}$.
If $\eta$ is adapted to $(\CF_t)_{t\in\RR_+}$ and $(\CF_t)_{t\in\RR_+}$ is not anticipated to $\eta$, we call
$\eta$ a time homogeneous Poisson random measure 
over $(\Omega,\CF,(\CF_t)_{t\in\RR_+},\PP)$.
\end{definition}
}

\noindent
Poisson random measures arise in a natural way. E.g.\
by means of a L\'evy process one can construct a Poisson random measure. 
\begin{definition} Let $E$ be a Banach space.
A stochastic process $\{ X(t):t\ge 0\}$ is a L\'evy process if the following conditions are satisfied.
\begin{itemize}

\item for any choice $n\in\NN$ and $0\le t_0<t_1<\cdots t_n$, the random variables
$X(t_0)$, $X(t_1)-X(t_0)$, $\ldots$, $X(t_n)-X(t_{n-1})$ are independent;
\item $X_0=0$ a.s.;
\item For all $0\le s<t$, the distribution of $X(t+s)-X(s)$ does not depend on $s$;
\item $X$ is stochastically continuous;
\item the trajectories of $X$ are a.s. c\'adl\'ag on $E$. 
\end{itemize}
\end{definition}

The characteristic function of a L\'evy process is uniquely defined and is given by the L\'evy-Khinchin formula.
 formula.
\del{(see arujo, Linde Theorem 5.7.3, p. 90)}
Here, we assume for simplicity that $E$ is a Hilbert space with inner product $\la\cdot,\cdot\ra$. For discussion on Banach spaces we refer e.g.\ to 
\cite{Gine,araujo,Linde}.
 Then,
for any $E$-valued L\'evy process  $\{ L(t):t\ge 0\}$  there exists a trace class operator $Q$, a non negative measure 
$\nu$ concentrated at 
$E\setminus \{0\}$ and an element $m\in E$ such that 
\DEQS
 \lqq{ \EE e^{i\la \theta ,L(t)\ra}= \exp\Big( i\la m,x\ra +\frac 12 \la Qx,x\ra }&&
\\&&\phantom{mmmmmmm}{}+
\int_E \lk( 1-e ^ {i\la x,y\ra }+1_{(-1,1)}(|y|_E)i\la x,y\ra \rk)\nu(dy)  \Big).
\EEQS
We call the measure $\nu$ {\em characteristic measure} of the L\'evy process $\{ L(t):t\ge 0\}$.
Moreover, note that the triplet $(Q,m,\nu)$ uniquely determines the law of the  L\'evy 
process. Now, one can construct a Poisson random measure with intensity measure $\nu$.

\begin{example}\label{levy}
Given a filtered probability space $(\Omega,\CF,(\CF_t)_{t\ge 0}, \PP)$
and a Hilbert space $E$.
To each 
time homogeneous $E$-valued L\'evy process   $\{ L(t):t\ge 0\}$ 
on $(E,\CB(E))$ with characteristic measure $\nu$, we can associate a counting measure, denoted by $\eta_L$ 
over $(\Omega,\CF,(\CF_t)_{t\ge 0}, \PP)$ by  
$$
\CB(E)\times \CB(\RR_+)\ni (B,I) \mapsto \eta_L(B\times I ) := \#\{s\in I \mid 
L_{s}-L_{s-}\in B\}\in\NN_0\cup\{\infty\}.
$$
The counting measure is a time homogeneous Poisson random measure with intensity measure $\nu$.
Moreover, 
$$
L(t) = \int_0 ^ t z\, \eta_L(dz,ds), \quad t\ge 0.
$$
\end{example}
For more details on the relationship between Poisson random measure and L\'evy processes we refer to Applebaum \cite{apple}
Ikeda and Watanabe \cite{ikeda} or Peszat and
Zabczyk \cite{zab}.
\\[0.2cm]
Let us assume  that
 $1< p\le 2$ and $E$ is a Banach space of martingale type $p$,  see e.g.\ the Appendix of \cite{maxreg} for a definition.
Let us assume that $(S,\CS)$ is a measurable space and
 $\nu\in M ^ +(S)$.  Suppose that $ (\Omega,\CF,(\mathcal{F}_t)_{t\geq 0},\PP)$ is a filtered
probability space, $\eta: \CS\times \CB(\RR_+)\to \bar{\mathbb{N}}$ is a time homogeneous Poisson random measure
with intensity measure $\nu$ defined over $(\Omega,\CF,(\mathcal{F}_t)_{t\geq 0},\PP)$.
We will denote by $\tilde\eta=\eta-\gamma$ the   compensated Poisson random measure  associated to $\eta$,
where the compensator $\gamma$ is defined  by
$$
\CB(\RR_+)\times \CS\ni (A,I)\mapsto  \gamma(A,I)=\nu(A)\lambda(I)\in\RR _ +.
$$

We have proved in \cite{maxreg} that there exists a unique
continuous linear operator which associates   with   each progressively measurable
process $\xi\in\mathcal{M}^p (\RR_+;L ^ p(S,\nu;E) )$ \del{$:\Rb{+}\times S\times \Omega \to E$ such that
\begin{equation}
\label{cond-2.01}\mathbb{E} \int_0^T\int_S\vert \xi(r,x)\vert^p\,
\nu(dx)\,dr <\infty, \quad T>0,
\end{equation}
}
an adapted c\'adl\'ag $E$-valued process, denoted by $\int_0^t \int_S \xi(r,x)
\tilde{\eta}(dx,dr)$, $t\geq 0$, such that if a process $\xi\in \CM(\RR_+,L ^ p(S,\nu;E))$ 
 is a
random step process with representation
$$
\xi(r) = \sum_{j=1} ^n 1_{(t_{j-1}, t_{j}]}(r)\, \xi_j,\quad r\ge 0,
$$
where $\{t_0=0<t_1<\ldots<t_n<\infty\}$ is a finite partition of $[0,\infty)$ and
for all $j$,    $\xi_j$ is  an $E$-valued $\CF_{t_{j-1}}$ measurable, $p$-summable  random variable, then
\begin{equation}
\label{eqn-2.02} \int_0^t \int_S \xi(r,x)\,
\tilde{\eta}(dx,dr)=\sum_{j=1}^n  \int_S  \xi_j (x)\,\tilde \eta \lk(dx,(t_{j-1}\wedge t, t_{j}\wedge t] \rk).
\end{equation}

The continuity mentioned above means that there exists a constant
$C=C(E)$ independent of $\xi$ such that

\begin{equation}
\label{ineq-2.03} \mathbb{E} \vert \int_0^t \int_S \xi(r,x)\,
\tilde{\eta}(dx,dr)\vert ^p \leq C\,\mathbb{E} \int_0^t\int_S\vert
\xi(r,x)\vert^p\, \nu(dx)\,dr, \; t\geq 0.
\end{equation}

As mentioned above, we are interested in inequalities satisfied by the 
stochastic integral
processes given by 
$$
\RR_+\ni t\mapsto I(t) = \int_0 ^ t \int_S  \xi(s,z)\, \tilde \eta(dz,ds).
$$
\begin{proposition} \label{integral-1}
Let $1<p\le 2$ and let $E$ be a separable Banach space of martingale type $p$.
Let $(\Omega,\CF,(\CF _t)_{t\ge 0},\PP)$  be a filtered probability space.
Assume that $\tilde \eta$ is a compensated time homogeneous Poisson random measure over $(\Omega,\CF,(\CF _t)_{t\ge 0},\PP)$ 
with intensity $\nu $ and compensator $\gamma$.
Assume that $\xi\in\mathcal{M}^p (\RR_+;L ^ p(S,\nu;E) )$. \del{ is a progressively measurable process over $(\Omega,\CF,(\CF _t)_{t\ge 0},\PP)$ taking values in $L^p(S,\nu,E)$ such that 
$$\int_0^\infty \int_S |\xi (s,z) |  ^ p\, \nu(dz)\, ds <\infty.
$$
}
\del{Let 
\DEQS
[0,\infty ) \ni t\mapsto 
\int_0 ^ t 
\int_S 
  \xi(s,z)\, \tilde  \eta(dz,ds), \quad t\ge 0.
\EEQS
}
Then 
\begin{trivlist}
%
%
%

\item[(i)]
there exists a constant $C=C_p(E)\, 2^{2-p}$ only depending on   $E$ and  $p$  such that 
\DEQS 
\lefteqn{
 \EE\sup_{0<t\le T} \lk|\int_{0} ^{t}\int_S  \xi(\sigma,z)\; \tilde \eta(dz;d\sigma)\rk| ^r
 \le
}
&&
\\
\nonumber
&&
\quad\quad C \:\lk(\int_0 ^T  \int_S
\EE \lk|\xi(s,z)\rk| ^p \;\nu(dz)\; ds\rk) ^{r\over p},\quad 0<r\le p.
\EEQS
\item[(ii)] 
there exists a constant $C= C_p(E)\, 2^{r(2+\frac {1 }p)} (m_0-1)^ {(p-1)\frac rp}$, $m_0=\inf\{n\ge 1:p-n\frac pr \le 1\}$, only depending on   $E$, $p$, and $r$ such that 
\DEQS \lefteqn{
 \EE\sup_{0<t\le T} \lk|\int_{0} ^{t}\int_S  \xi(\sigma,z)\;  \tilde \eta(dz;d\sigma)\rk| ^r \le
}
&&
\\\nonumber
&&
\quad\quad   C\, \EE\lk(\int_0 ^T  \int_S
 \lk|\xi(s,z)\rk| ^p \;\eta(dz;ds)\rk) ^{r\over p},\quad p\le r<\infty,
\EEQS

\item[(iii)] Let $q$ be a natural number with $q=p^n$ for a number $n\in\NN$.
If in addition
$$ \int_0 ^t \int_S \EE| \xi(s,z)| ^{q}\;\nu(dz)\;
ds  <\infty,
$$
then 
\DEQS
\lefteqn{
\EE\sup_{0<s\le t}\lk| \int_{0 } ^t\int_S \xi(s,z) \tilde \eta(dz;ds)\rk| ^q\le}
\\
&& \quad\quad 2^{2-p}\; \sum_{l=1} ^n \bar C(l) \EE \lk( \int_0 ^t \int_S | \xi(s,z)| ^{p ^l}\;\nu(dz)\;
ds \rk) ^{p ^{n-l}} .
\EEQS
where $\bar C(0)=1$ and $\bar C(i)= \bar C(i-1)2 ^ { p ^ {n-i+1}+2p ^ {n-1}} \,(m(n-i)-1)^ {(p-1)\frac rp} $,
$m(i)=[ p ^ {i-1}(p-1)]+1$.

\end{trivlist}
\end{proposition}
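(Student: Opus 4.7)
The plan is to establish part (i) directly from the existing martingale type $p$ estimate (\ref{ineq-2.03}), then obtain (ii) by an iterative reduction of the exponent back to the regime handled by (i), and finally deduce (iii) by induction on $n$ using (ii) together with the splitting $\eta=\tilde\eta+\gamma$.

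\emph{Part (i).} For $r=p$ I would combine the Doob-type $L^p$ maximal inequality, valid for the $E$-valued martingale $t\mapsto I(t)$ since $p>1$, with the fundamental estimate (\ref{ineq-2.03}); this bounds $\EE\sup_{s\le T}|I(s)|^p$ by $C_p(E)\,2^{2-p}\,\EE\int_0^T\int_S|\xi|^p\,\nu(dz)\,ds$. For $0<r<p$ I would invoke Jensen's inequality in the form $(\EE\sup|I|^r)^{1/r}\le(\EE\sup|I|^p)^{1/p}$, which, after raising to the $r$-th power, produces the $r/p$-th power on the right-hand side while preserving the constant.

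\emph{Part (ii).} For $r\ge p$ the right-hand side involves the uncompensated measure $\eta$. Writing $A_t:=\int_0^t\int_S|\xi|^p\,\eta(dz,ds)$, the idea is an iterative reduction: a first application of (i) to the compensated part of $A_t$, combined with H\"older's inequality on the product $|I(t)|^r=|I(t)|^p\cdot|I(t)|^{r-p}$ and a stopping-time/Lenglart argument, bounds $\EE\sup|I|^r$ in terms of $\EE A_t^{r/p}$ plus a remainder whose working exponent is reduced by $p/r$ compared to $r$. Iterating this reduction $m_0=\inf\{n\ge 1:p-np/r\le 1\}$ times lowers the exponent into the range $(0,p]$ where (i) is directly applicable, and the accumulated H\"older factors produce exactly the constant $C_p(E)\,2^{r(2+1/p)}(m_0-1)^{(p-1)r/p}$.

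\emph{Part (iii).} Induct on $n$. The base case $n=1$ is (i) with $r=p=q$. For the inductive step, apply (ii) with $r=q=p^n$ to bound $\EE\sup_{s\le t}|I(s)|^q$ by $2^{2-p}\,\EE\bigl(\int_0^t\int_S|\xi|^p\,\eta(dz,ds)\bigr)^{p^{n-1}}$. Then split $\eta=\tilde\eta+\gamma$ and use the convexity inequality $(a+b)^{p^{n-1}}\le 2^{p^{n-1}-1}(a^{p^{n-1}}+b^{p^{n-1}})$ to separate the two contributions. The $\gamma$-term produces exactly the $l=1$ summand on the right-hand side. The $\tilde\eta$-term is the $p^{n-1}$-th moment of the compensated stochastic integral of the $\RR$-valued process $|\xi|^p$; since $\RR$ is of martingale type $2\ge p$, the inductive hypothesis applied with $(n-1,|\xi|^p)$ in place of $(n,\xi)$ yields the remaining summands with shifted index $l\mapsto l+1$ and assembles the recursive constants $\bar C(l)$.

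The main obstacle is (ii): recovering the correct dependence of the constant on $r$ and $p$ in the Banach-valued setting requires a careful bookkeeping of the exponent in the iterative reduction, and the emergence of the parameter $m_0$ as precisely the number of rounds needed to bring the working exponent back into the range covered by (i) is the combinatorial heart of the argument; everything else then reduces to organised applications of H\"older, Doob, and Jensen.
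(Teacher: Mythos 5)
Your outlines of (i) and (iii) follow the paper's route: (i) is the martingale type $p$ estimate combined with Doob's maximal inequality and Jensen for $r<p$, and (iii) is exactly the paper's induction --- apply (ii) with $r=p^n$, split $\eta=\tilde\eta+\gamma$, and feed the real-valued compensated integral $\int|\xi|^p\,d\tilde\eta$ (for which $\mathbb{R}$ is again of martingale type $p$) back into the inductive hypothesis. Those parts are sound in outline.

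The gap is in (ii), which is the technical heart of the proposition. Two problems. First, the iteration is described only at the level of slogans (``H\"older on $|I|^r=|I|^p\cdot|I|^{r-p}$'', ``a stopping-time/Lenglart argument'', ``a remainder whose working exponent is reduced by $p/r$''): no inequality is actually exhibited, and it is not said which object carries the reduced exponent at each stage. Second, and more seriously, an iteration that terminates by ``lowering the exponent into the range where (i) applies'' would necessarily end with the deterministic compensator $\nu(dz)\,ds$ on the right-hand side, because that is what (i) produces; but the whole point of (ii) is that the right-hand side is the random, uncompensated quantity $\EE\bigl(\int_0^T\int_S|\xi|^p\,\eta(dz,ds)\bigr)^{r/p}$. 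Your scheme is the natural one for proving (iii), not (ii). Relatedly, $m_0=\inf\{n\ge1:\,p-np/r\le1\}$ is not an iteration count in the paper: it is the threshold at which the values $l$ of the Poisson variables $\eta(B_j\times(s_{k-1},s_k])$ are split, chosen precisely so that $l^{p}/\bigl(l(l-1)\cdots(l-m_0)\bigr)^{p/r}\le C(l-m_0)$ for $l\ge m_0$, which is what allows the large-count contribution to be reabsorbed into the $\eta$-integral. The paper's actual proof of (ii) discretizes (restriction to $S_\ep$ plus simple functions), applies the $\Phi$-Burkholder--Davis--Gundy inequality for martingale type $p$ spaces with $\Phi(x)=x^r$ to bound $\EE\sup|I|^r$ by $C\,\EE\bigl(\sum_{k,j,i}|\xi^k_{ji}\,\tilde\eta(B_j\times(s_{k-1},s_k])|^p\bigr)^{r/p}$, and then converts $|\tilde\eta(B)|^p=|N-\lambda|^p$ into a multiple of $N$ inside the $L^{r/p}$ norm via the small/large-$l$ splitting just described, the compensator remainder vanishing as $\ep\to0$. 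Without an argument of this kind (or a continuous-time good-$\lambda$ comparison of $\sup|I|$ with $(\int|\xi|^p\,d\eta)^{1/p}$), your sketch of (ii) does not constitute a proof; and since your (iii) is derived from (ii), the gap propagates.
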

\begin{remark}
If the underlying Banach space $E$ is a Hilbert space or $\RR^d$ equipped by the Euclidean norm, then $E$ is a 
Banach space of $M$-type $2$, and $C_2(E)=1$.
For other cases we refer to the book of Linde 
\cite{Linde}.

\end{remark}
%
Assume for the next paragraph, that $E$ is a Hilbert space and 
that the time homogeneous Poisson random measure is the counting measure
of the L\'evy process described in Example \ref{levy}. 
But before we look at the formulation of Proposition \ref{integral-1}
in terms of L\'evy processes, we introduce a important process associated to a L\'evy process.
The {\em jump process} $\Delta X = \{\Delta _tX,\,0\le t<\infty\}$
of a process $X$ is given by 
$$
\Delta_t X(t) := X(t) - X(t-)= X(t) - \lim_{\ep\to 0} X(t-\ep) ,\quad t\ge 0.
$$
Assume that $X$ arises by stochastic integration of a L\'evy process of pure jump type.
In particular, we assume that there exists a L\'evy process $L$ and a  c\'adl\'ag process $h\in \CM^p(\RR_+, L(Z,E))$
such that
$$
X(t)  := \int_0 ^ t h(s)\, dL(s),\quad t\ge 0.
$$
Then, $\Delta_tX= h(t)\Delta_t L$, $t\ge 0$.
Now, the Proposition \ref{integral-1} reads 
as follows. 

\begin{corollary}
Let $(\Omega,\CF,(\CF_t)_{t\ge 0}, \PP)$ be  a filtered probability space
and $E$ is a Hilbert space.
Let  $L=\{ L(t), \, 0\le t<\infty\} $ be 
time homogeneous $E$-valued L\'evy process   
with characteristic measure $\nu$
over $(\Omega,\CF,(\CF_t)_{t\ge 0}, \PP)$,
let $h\in \CM^p(\RR_+, L(E,\nu;E))$ be a c\'adl\'ag process such that $h:[0,\infty)\to L(E,E)$ and
$X=\{ X(t), \, 0\le t<\infty\}$ be given by 
$$X(t):= \int_0 ^ t h(s)\, dL(s),\quad t\ge 0.
$$
Then
\begin{trivlist}
\item[(ii)] 
there exists a constant $C= C_p(E)\, 2^{r(2+\frac {1 }p)} (m_0-1)^ {(p-1)\frac rp}$, $m_0=\inf\{n\ge 1:p-n\frac pr \le 1\}$,  only depending on   $E$, $p$, and $r$ such that 
\DEQS 
 \EE\sup_{0<t\le T} \lk| X(t) \rk| ^r \le
  C\, \EE\lk(\sum_{s\le t}  \lk| \Delta _sX \rk| ^p\rk) ^{r\over p},\quad p\le r<\infty,
\EEQS

\item[(iii)] Let $q$ be a natural number with $q=p^n$ for a number $n\in\NN$.
If in addition
$$ \EE \sum_{s\le t}  \lk| \Delta _sX \rk| ^q \;
 <\infty,
$$
then 
\DEQS
\EE\sup_{0<s\le t}\lk| X(t)\rk| ^q &\le& 2^{2-p}\; \sum_{l=1} ^n \bar C(l) \EE \lk( \sum_{s\le t} \EE\lk[
 \lk| \Delta _sX \rk|  ^{p ^l}\mid \CF_{s-}\rk]\;
 \rk) ^{p ^{n-l}} .
\EEQS
where $\bar C(0)=1$ and $\bar C(i)= \bar C(i-1)2 ^ { p ^ {n-i+1}+2p ^ {n-1}} (m(n-i)-1)^ {(p-1)\frac rp} $,
$m(i)=[p ^ {i-1}(p-1)]+1$.
\end{trivlist}
\end{corollary}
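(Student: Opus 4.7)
The plan is to deduce the corollary directly from Proposition \ref{integral-1} by rewriting $X$ as a stochastic integral against the compensated Poisson random measure $\tilde \eta_L$ attached to $L$ via Example \ref{levy}, and then translating the two right-hand sides from the $(\eta_L,\nu)$ language into the jump language.

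First, I would use Example \ref{levy} to write $L(t)=\int_0^t\int_E z\,\eta_L(dz,ds)$ (or, after assuming that $L$ is the pure-jump martingale part so that no drift correction survives, $L(t)=\int_0^t\int_E z\,\tilde\eta_L(dz,ds)$), so that
$$
X(t)=\int_0^t h(s)\,dL(s)=\int_0^t\int_E h(s)z\,\tilde\eta_L(dz,ds),\qquad t\ge 0.
$$
Set $\xi(s,z):=h(s)z$. The assumption $h\in\CM^p(\RR_+,L(E,\nu;E))$ translates exactly into $\xi\in\mathcal{M}^p(\RR_+;L^p(E,\nu;E))$, so Proposition \ref{integral-1}(ii) and (iii) apply with $S=E$ and intensity $\nu$.

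Next, I would translate the two jump-quantities that appear on the right of Proposition \ref{integral-1}. Since $\eta_L$ is the jump-counting measure of $L$, for every progressively measurable $\xi$ of the form $\xi(s,z)=h(s)z$ one has
$$
\int_0^t\int_E|h(s)z|^{p}\,\eta_L(dz,ds)=\sum_{s\le t}|h(s)\Delta_s L|^{p}=\sum_{s\le t}|\Delta_s X|^{p},
$$
which substituted into Proposition \ref{integral-1}(ii) yields part (ii) of the corollary verbatim (with the same constant $C$). For part (iii) one substitutes the same identity with $p$ replaced by $p^{l}$ into Proposition \ref{integral-1}(iii).

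The only remaining point, and the one I expect to be the actual obstacle, is the identification
$$
\int_0^t\int_E|h(s)z|^{p^{l}}\,\nu(dz)\,ds=\int_0^t\EE\bigl[\,|\Delta_s X|^{p^{l}}\,\bigm|\,\CF_{s-}\bigr]\,ds,
$$
which is the compensator formula for the sum of $p^{l}$-th powers of jumps. I would justify this by noting that $\gamma(dz,ds)=\nu(dz)\,ds$ is the predictable compensator of $\eta_L$ and that $h$ is predictable and left-continuous (c\`adl\`ag), so by the dual predictable projection applied to the optional measure $|\Delta_s X|^{p^{l}}\,\delta_{(\cdot)}(ds)$ one recovers the compensator measure $|h(s)z|^{p^{l}}\nu(dz)\,ds$; this is the usual interpretation of the symbol $\sum_{s\le t}\EE[\,\cdot\,|\CF_{s-}]$ as a shorthand for the predictable compensator of the jump functional. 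With this identification the right-hand side of Proposition \ref{integral-1}(iii) becomes exactly the right-hand side of the corollary, and the constants $\bar C(l)$, $m(i)$ carry over unchanged, which completes the deduction.
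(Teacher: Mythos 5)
Your proposal is correct and coincides with the paper's (very terse) treatment: the paper presents this corollary as a direct restatement of Proposition \ref{integral-1} via Example \ref{levy}, using precisely the identifications $\Delta_s X=h(s)\Delta_s L$, $\int_0^t\int_S|h(s)z|^p\,\eta_L(dz,ds)=\sum_{s\le t}|\Delta_s X|^p$, and the reading of $\sum_{s\le t}\EE[\,|\Delta_s X|^{p^l}\mid\CF_{s-}]$ as the predictable compensator $\int_0^t\int_S|h(s)z|^{p^l}\,\nu(dz)\,ds$. You have simply made explicit the translation the paper leaves implicit, including the one genuinely delicate point (the compensator identification), so nothing further is needed.
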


\section{Proof of the Inequalities in Proposition \ref{integral-1}}
The proof of Inequality (i) is taken from Corollary C.2 of Brze{\'z}niak and Hausenblas 
\cite{maxreg}. If $r= p$ Inequality (ii) follows by the definition of the compensator.
Hence, we give here a proof valid for $r\ge p$.
Inequality (iii) can be shown by induction on $n$ and can be found also in \cite{MR98c:60063} or \cite{MR88b:60113}.

\begin{proof}[Proof of Inequality (i):] 
Before beginning let us state the following Lemma.
The proof of this Lemma can be found by direct calculation or in \cite[Lemma C.2]{maxreg}.
\begin{lemma}
\label{lem-cw6}
Suppose that $\xi \sim \text{ Poiss }(\lambda)$,
where $\lambda >0$. Then, for all $p\in [1,2]$,
$$\mathbb{E}|\xi-\lambda|^p \leq 2^{2-p}\lambda.$$
\end{lemma}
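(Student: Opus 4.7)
The plan is to interpolate between the $L^1$ and $L^2$ bounds for $\xi - \lambda$ via Hölder's inequality; both endpoint bounds are elementary and the interior case follows by one-line interpolation.

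First I would dispose of the endpoints. For $p=2$ the claim reduces to $\EE(\xi-\lambda)^2 = \text{Var}(\xi) = \lambda$, which is exactly $2^{2-p}\lambda$ with equality. For $p=1$, using $\EE(\xi-\lambda)=0$ I would split the $L^1$ norm as
$$\EE|\xi-\lambda| = 2\,\EE(\xi-\lambda)^+ \le 2\,\EE\xi = 2\lambda = 2^{2-p}\lambda,$$
where the inequality uses the trivial pointwise bound $(\xi-\lambda)^+ \le \xi$, valid because $\xi,\lambda \ge 0$.

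For the interior case $p\in(1,2)$, the plan is to write $|\xi-\lambda|^p = |\xi-\lambda|^{2-p}\cdot|\xi-\lambda|^{2(p-1)}$ and apply Hölder's inequality with conjugate exponents $\frac{1}{2-p}$ and $\frac{1}{p-1}$; these are both strictly greater than $1$ and satisfy $(2-p)+(p-1)=1$. This gives
$$\EE|\xi-\lambda|^p \le (\EE|\xi-\lambda|)^{2-p}\,(\EE|\xi-\lambda|^2)^{p-1} \le (2\lambda)^{2-p}\lambda^{p-1} = 2^{2-p}\lambda,$$
where the second inequality inserts the two endpoint bounds established above.

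I do not foresee a real obstacle: the argument rests entirely on the Poisson identities $\EE\xi = \text{Var}(\xi) = \lambda$, the mean-zero property of $\xi-\lambda$, and the trivial pointwise inequality $(\xi-\lambda)^+ \le \xi$. The only small point worth checking is that the Hölder exponents $1/(2-p),\, 1/(p-1)$ are admissible throughout $p\in(1,2)$, which is immediate. Note that the bound is sharp at $p=1$ (in the limit $\lambda\to 0^+$) and at $p=2$ (for every $\lambda$), so no improvement on the constant $2^{2-p}$ is possible with this interpolation scheme.
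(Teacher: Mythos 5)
Your proof is correct. The paper does not actually supply an argument for this lemma (it only points to ``direct calculation'' and to Lemma C.2 of \cite{maxreg}), and your interpolation between the $L^1$ bound $\mathbb{E}|\xi-\lambda|\le 2\lambda$ (via $\mathbb{E}(\xi-\lambda)=0$ and $(\xi-\lambda)^+\le\xi$) and the $L^2$ identity $\mathbb{E}(\xi-\lambda)^2=\lambda$, using H\"older with exponents $\tfrac{1}{2-p}$ and $\tfrac{1}{p-1}$, is exactly the standard direct calculation being alluded to; all steps check out, including the exponent bookkeeping $(2-p)+2(p-1)=p$.
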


In the proof of Inequality (i), we will approximate $\xi$ by a sequence 
of simple functions, i.e.
$$\int_{S} \xi(s,z) \tilde \eta(dz,ds)=\lim_{n\to\infty} \int_{S} \xi_n(s,z) \tilde \eta(dz,ds),
$$
where $\xi_n\to\xi$ in $\CM^p(\RR_+; L^p(\nu, E))$.

Therefore, we first show that the inequality is valid for a simple function, and
then extend the inequality to the completion of the set of simple functions, i.e. to all progressively measurable
functions which are $L^p$-integrable. 
Thus,
we suppose here and hereafter that $\xi$ is a simple function. In particular,
we suppose that $\xi$ has the following representation
$$
\xi=\sum_{k=1}^{K}1_{(s_{k-1}, s_k]}(s)\sum_{j}^ J \sum_i ^ I \xi ^k_{ji}\, 1_{A^k_{ji}\times B^k_j}
$$  
with $s_k-s_{k-1}=\tau>0$, $\xi^k_{ji}\in E$, 
$A^k_{ji}\in \mathcal{F}_{s_{k-1}}$, $k=1,\ldots, K$, and $B_j\in\CB(S_\ep)$,
 the finite families of sets $({A_{ji}\times B_j})$ and $({ B_j})$ being pair-wise disjoint and $\nu(B_j)\le 1$.   
 Let us notice that
\DEQS
\int_0^T  \int_S \xi(s, x)\tilde{\eta}(dx,ds)= 
\sum_k  ^ K\sum_{j}^ J  \left(\sum_{i} ^ I 1_{A^k_{ji} } \, \xi^k_{ji}\right)\, \tilde  \eta( B_j\times (s_{k-1},s_k] ).
\EEQS
The Burkholder-Davis-Gundy   inequality, see Inequality \ref{burkholder-simple}, gives
\DEQS
\lqq{\EE \lk| \int_0^T  \int_S \xi(s, x)\tilde{\eta}(dx,ds)\rk|^p \le } &&\\&&
C_p(E)\, \EE\lk( \sum_k ^ K \sum_{j}  ^ J \EE \lk|\sum_{i} ^ I  1_{A^k_{ji} } \, \xi^k_{ji}\,  \tilde  
                \eta( B_j\times (s_{k-1},s_k] )\rk|^p\rk).
\EEQS
Recall that for fixed $k$, the family $\{A_{ji}^k, i=1,\ldots\}$ consists of  disjoint sets. This implicates that 
for fixed $\omega\in\Omega$ only one term of the 
sum over the index $i$ will be not equal to zero. Therefore, we can write
\DEQSZ\label{tocalcu-r}\nonumber 
\lqq{\EE \lk| \int_0^T  \int_S \xi(s, x)\tilde{\eta}(dx,ds)\rk|^p \le } &&\\&&
C_p(E)\,  \sum_k  ^ K\sum_{j} ^ J  \sum_{i} ^ I \EE \lk|  1_{A^k_{ji} } \, \xi^k_{ji} \, \tilde  
                \eta( B_j\times (s_{k-1},s_k] )\rk|^p.
\EEQSZ
In the next step we the fact that $ \eta( B_j\times (s_{k-1},s_k] )$ is Poisson  distributed with 
parameter $\nu(B_j)\tau$.
Therefore,  \eqref{tocalcu-r} reads 
\DEQS
C_p(E)\,  \EE \sum_k ^K \sum_{j} ^J \sum_{i}^I   1_{A^k_{ji} } \lk|  \;\xi^k_{ji}   \EE \lk[  \lk( \sum_{l=1}^\infty 
 1_{\{ \eta( B_j\times (s_{k-1},s_k] ) =l\}} \; l
        -  \lambda_j \rk)  \rk|^p   \mid \CF_{k\tau} \rk]
\EEQS
(note $\CF_{k\tau}=\CF_t$ for $t=k\tau$) and Lemma \ref{lem-cw6} gives
\DEQS
\EE \lk| \int_0^T  \int_S \xi(s, x)\tilde{\eta}(dx,ds)\rk|^p \le C_p(E)\;2^{2-p} \, \EE  \sum_k ^K \sum_{j} ^J \sum_{i}^I
1_{A^k_{ji} } \lk|   \;\xi^k_{ji} \rk|^p \nu(B_j)\tau  .
\EEQS
Going back we arrive at 
\DEQSZ\label{mmmm} 
\\\nonumber
\EE \lk| \int_0^T  \int_S \xi(s, x)\tilde{\eta}(dx,ds)\rk|^p \le C_p(E)\; 2^{2-p} \, \EE\int_0^T \int_S \lk| \xi(s,z)\rk|^p\nu(dz)\, ds.
\EEQSZ
Now, assume that $\xi$ is a progressively measurable process such that
$$
\EE
\int_0^\infty \int_S |\xi(s,z)|^p\nu(dz)\, ds <\infty.
$$
Due to Lemma 1.1, Chapter 1 in \cite{DaP}, there exists a sequence of simple functions $(\xi_n)_{n\in\NN}$
such that $\xi_n\to \xi$ in $\CM^p(\RR_+; L^p(\nu, E))$. Now, due to the definition of the stochastic integral we have 
\DEQS
\EE \lk| \int_0^T  \int_S \xi(s, x)\tilde{\eta}(dx,ds)\rk|^p =
\EE \lk| \lim_{n\to\infty}  \int_0^T  \int_S \xi_n(s, x)\tilde{\eta}(dx,ds)\rk|^p .
\EEQS
The continuity of the norm and inequality \eqref{mmmm} imply
\DEQS
\lqq{\hspace{-2.5cm} \EE \lk| \lim_{n\to\infty}  \int_0^T  \int_S \xi_n(s, x)\tilde{\eta}(dx,ds)\rk|^p =
 \lim_{n\to\infty}  \EE \lk| \int_0^T  \int_S \xi_n(s, x)\tilde{\eta}(dx,ds)\rk|^p 
 }
&& \\
 \phantom{mmmmm}&=& \lim_{n\to\infty}   C_p(E)\; 2^{2-p} \, \EE\int_0^T \int_S \lk| \xi_n(s,z)\rk|^p\nu(dz)\, ds
\\ & = & C_p(E)\; 2^{2-p} \, \EE\int_0^T \int_S \lk| \xi(s,z)\rk|^p\nu(dz)\, ds .
\EEQS
\end{proof}

\begin{proof}[Proof of Inequality (ii)] 
The integrals in inequality  (ii) will be approximated first by 
the omitting the small jumps, i.e.\ by the following limits 
$$
\int_0^t\int_S \xi(s,z) \tilde \eta(dz,ds) = \lim _{\ep\to 0}\int_{S^\ep} \xi(s,z) \tilde \eta(dz,ds)
$$
and
$$
\int_0^t\int_S |\xi(s,z)|^p  \eta(dz,ds) = \lim _{\ep\to 0}\int_{S^\ep} |\xi(s,z)|^p \eta(dz,ds)
$$
where $S_\ep :=S \setminus  B_S(\ep)\footnote{$B_S(y):= \{ x\in S, |x|\le y\}$}$.
Secondly the integrand will be approximated by a sequence of simple functions, i.e.
$$\int_{S^\ep} \xi(s,z) \tilde \eta(dz,ds)=\lim_{n\to\infty} \int_{S^\ep} \xi_n(s,z) \tilde \eta(dz,ds),
$$
where $\xi_n\to\xi$ in $\CM^r(\RR_+; L^r(\nu, E))$.

Before starting with the proof, let here and hereafter $\ep>0$ be fixed. 
Also, we suppose here and hereafter that $\xi$ is a simple function. In particular,
we suppose that $\xi$ has the following representation
$$
\xi=\sum_{k=1}^{K}1_{(s_{k-1}, s_k]}(s)\sum_{j}^ J \sum_i ^ I \xi ^k_{ji}1_{A^k_{ji}\times B^k_j}
$$  
with $s_k-s_{k-1}=\tau>0$, $\xi^k_{ji}\in E$, 
$A^k_{ji}\in \mathcal{F}_{s_{k-1}}$, $k=1,\ldots, K$, and $B_j\in\CB(S_\ep)$,
 the finite families of sets $({A_{ji}\times B_j})$ and $({ B_j})$ being pair-wise disjoint and $\nu(B_j)\le 1$.   
 Let us notice that
\DEQS
\int_0^T  \int_{S_\ep}  \xi(s, x)\tilde{\eta}(dx,ds)= 
\sum_k  ^ K\sum_{j}^ J  \left(\sum_{i} ^ I 1_{A^k_{ji} }  \xi^k_{ji}\right) \tilde  \eta( B_j\times (s_{k-1},s_k] ).
\EEQS
Let $r\ge p$, with $r=p^ m$, $m\in\NN$, be fixed. The Burkholder-Davis-Gundy   inequality, i.e.\ inequality \ref{burkholder-ineq}
with $\Phi(x)=x ^ r$, $x\ge 0$, gives
\DEQS
\lqq{\EE \lk| \int_0^T  \int_{S_\ep} \xi(s, x)\tilde{\eta}(dx,ds)\rk|^r \le } &&\\&&
C_p(E)\, \EE\lk( \sum_k ^ K \sum_{j}  ^ J \lk|\sum_{i} ^ I  1_{A^k_{ji} }  \xi^k_{ji}  \tilde  
                \eta( B_j\times (s_{k-1},s_k] )\rk|^p\rk)^{\frac rp}.
\EEQS
Recall that for fixed $k$ and $j$, $A_{ji}^k$, $i=1,\ldots, I$ are disjoint sets. This implicates that 
only one term of the inner
sum will be not equal to zero. Therefore, we can write
\DEQSZ\label{tocalcu}\nonumber 
\lqq{\EE \lk| \int_0^T  \int_{S_\ep} \xi(s, x)\tilde{\eta}(dx,ds)\rk|^r \le } &&\\&&
C_p(E)\,  \EE\lk( \sum_k  ^ K\sum_{j} ^ J  \sum_{i} ^ I \lk|  1_{A^k_{ji} }  \xi^k_{ji}  \tilde  
                \eta( B_j\times (s_{k-1},s_k] )\rk|^p\rk)^{\frac rp}.
\EEQSZ
Plugging in the definition of $\tilde  
                \eta( B_j\times (s_{k-1},s_k] )$, 
the RHS of \eqref{tocalcu} reads 
\DEQS
C_p(E)\,  \EE\lk( \sum_k ^K \sum_{j} ^J \sum_{i}^I\lk|  1_{A^k_{ji} } \;\xi^k_{ji}  \lk( \sum_{l=1}^\infty  1_{\{
\eta( B_j\times (s_{k-1},s_k] ) =l\}} \; l
        -   \nu(B_j)\tau \rk)  \rk|^p\rk)^{\frac rp}.
\EEQS
Using $|x-y|^p\le 2 ^ p(|x|^p+|y|^p)$  we obtain
\DEQS
& & \ldots \, \le 
C_p(E) \, 2^r\, 
\\ &&\hspace{-0.5cm} 
\EE\lk( \sum_k ^ K \sum_{j} ^ J  \sum_{i} ^ I\lk| \sum_{l=1}^\infty  1_{A^k_{ji} }  1_{\{\eta( B_j\times (s_{k-1},s_k] ) =l\}} 
 l \xi^k_{ji}    
  \rk|^p+  \sum_k ^ K \sum_{j} ^ J  \sum_{i} ^ I \lk| 1_{A^k_{ji} }\xi^k_{ji}    \nu(B_j)\tau  \rk|^p\rk)^{\frac rp}
\\&\le & C_p(E) \, 2 ^ r
\\ &&\hspace{-0.5cm}  \EE\lk( \sum_k ^ K \sum_{j} ^ J  \sum_{i}  ^ I\sum_{l=1}^\infty  1_{A^k_{ji} } 
1_{\{ \eta( B_j\times (s_{k-1},s_k] ) =l\}}  l ^p\lk| \xi^k_{ji}    
  \rk|^p+  \sum_k ^ K \sum_{j}  ^ J \sum_{i} ^ I 1_{A^k_{ji} } \lk| \xi^k_{ji}    \nu(B_j)\tau  \rk|^p\rk)^{\frac rp}.
\EEQS 
Using $|x-y|^q\le  2 ^ q \, (|x|^q+|y|^q)$ for $q=\frac rp$ we get
\DEQSZ\label{lhs}\nonumber 
\ldots  &\le &
C_p(E)\, 2 ^ {r+\frac rp} \,   \EE\lk( 
\sum_k ^K\sum_{j}^J  \sum_{i}^I \sum_{l=1}^\infty  1_{A^k_{ji} } 1_
{\{ \eta( B_j\times (s_{k-1},s_k] ) =l\}} \, l ^p\, \lk| \xi^k_{ji}    
  \rk|^p\rk)^{\frac rp}
\\
\nonumber &&\hspace{1.5cm} {}
+C_p(E)\,  2 ^ {r+\frac rp} \,   \tilde \EE\lk(   \sum_k ^K\sum_{j}^J  \sum_{i}^I \lk| 1_{A^k_{ji} }\xi^k_{ji}    \nu(B_j)\tau  \rk|^p\rk)^{\frac rp}.
\EEQSZ 
Let $m_0$ be so large that $p-1\le m_0\frac pr$. 
We  split again the inner term in the inner sum. Doing this we get
\DEQSZ\label{ssss}\nonumber
&\ldots &\le 
 C_p(E)   \,  2 ^ {r+\frac rp} \,  
 \EE\lk(  \sum_k ^ K \sum_{j} ^ J  \sum_{i} ^ I\sum_{l=1}^{m_0-1}
  1_{\{ \eta( B_j\times (s_{k-1},s_k] ) = l\}}\, 1_{A^k_{ji} } \, l^p\, \lk|  \xi^k_{ji}    
  \rk|^p\rk) ^\frac rp 
  \\&&\nonumber \hspace{0.5cm}{} +  C_p(E)\,  2 ^ {r+\frac rp} \,   \EE \lk( \sum_k ^ K \sum_{j} ^ J  \sum_{i} ^ I
\sum_{l=m_0}^\infty  1_{A^k_{ji} } 1_{\{ \eta( B_j\times (s_{k-1},s_k] ) =l\}}  l ^ p \lk|  \xi^k_{ji}    
  \rk|^p\rk) ^\frac rp
 \\ &&{}\hspace{2cm}{}  + C_p(E) \,  2 ^ {r+\frac rp} \,  \tilde \EE\lk( 
   \sum_k ^K\sum_{j}^J  \sum_{i}^I \lk| 1_{A^k_{ji} }\xi^k_{ji}    \nu(B_j)\tau  \rk|^p\rk)^{\frac rp}.
\EEQSZ
\noindent
\noindent
The first term in \eqref{ssss} can be estimated in the following way. 
Since $l\le ( m_0-1)$, we put $(m_0-1)^{(p-1)\frac rp}$ in front of the braket.
Next, we add the additional
terms for $l=m_0,m_0+1,\ldots$ and then change again the representation
\DEQS
&\ldots &\le  \EE\lk( \sum_k ^ K \sum_{j} ^ J  \sum_{i} ^ I  1_{A^k_{ji} } \sum_{l=1}^{m_0-1}
 1_{\{ \eta( B_j\times (s_{k-1},s_k] ) = l\}} \,l^p  \lk|  \xi^k_{ji}    
  \rk|^p \rk) ^\frac rp 
 \\ 
 &=&  (m_0-1)^{(p-1)\frac rp}\, \EE\lk( \sum_k ^ K \sum_{j} ^ J  \sum_{i} ^ I 
\sum_{l=1}^{m_0-1}     1_{\{ \eta( B_j\times (s_{k-1},s_k] ) = l\}} \, 1_{A^k_{ji} }l \;\lk| \xi_n^j\rk|^p 
 \rk)^\frac rp  
 \\ 
 &\le&  (m_0-1)^{(p-1)\frac rp}\, \EE\lk( \sum_k ^ K \sum_{j} ^ J  \sum_{i} ^ I 
\sum_{l=0}^{\infty }    1_{\{ \eta( B_j\times (s_{k-1},s_k] ) = l\}} \, 1_{A^k_{ji} }l \;\lk| \xi_n^j\rk|^p 
 \rk)^\frac rp  
\\ &=&(m_0-1)^{(p-1)\frac rp}\,
  \EE\lk( \sum_k  ^ K\sum_{j} ^ J  \sum_{i} ^ I  1_{A^k_{ji} }  \lk| \xi^k_{ji}  \rk|^p
                \eta( B_j\times (s_{k-1},s_k] )\rk)^{\frac rp}
\\ &=& (m_0-1)^{(p-1)\frac rp}\, \EE\lk( \int_0^T  \int_{S^\ep} |\xi(s,z)|^p \eta(dz,ds)\rk)^\frac rp 
\\ &\le&  (m_0-1)^{(p-1)\frac rp}\, \EE\lk( \int_0 \prod_{k}^K \, \prod_j^J { \nu(B_j)^{l_{kj}}\tau ^{l_{kj}}\over {l_{kj}}!}^T  \int_{S} |\xi(s,z)|^p \eta(dz,ds)\rk)^\frac rp .
 \EEQS
Now we consider the second term of \eqref{ssss}. 
First, the sets $B_j\times (s_{k-1},s_k]$ are disjoint, therefore
the random variables $ \eta( B_j\times (s_{k-1},s_k] )$ independent. Secondly,
the $\NN_0$ valued random variables
$\{ \eta( B_j\times (s_{k-1},s_k] )\}$ are Poisson  distributed with parameter $\nu(B_j)\tau$,
therefore, the  explicit formula of the expectation gives 
\DEQS
\lqq{ 
\EE \lk(  \sum_k ^ K \sum_{j} ^ J  \sum_{i} ^ I
\sum_{l=m_0}^\infty  1_{A^k_{ji} }  1_{\{ \eta( B_j\times (s_{k-1},s_k] ) = l\}} \, l ^ p \lk|  \xi^k_{ji}    
  \rk|^p\rk) ^\frac rp
=} &&\\
&& 
\sum_{\cl\in \tilde \Omega } \PP\lk(  \eta( B_j\times (s_{k-1},s_k] ) = l_{k,j}, 0\le j\le J,0\le k\le K 
\rk)
\\ && \EE \Bigg[ \Big(\sum_k ^ K \sum_{j} ^ J  \sum_{i} ^ I\sum_{l=m_0}^{\infty }
 1_{A^k_{ji} }  1_{\{ l 
 = \cl_{k,j} \}} \, \cl_{k,j} ^ p \lk|  \xi^k_{ji}    
  \Big|^p\rk) ^\frac rp  
  \\ &&\quad\quad \mid \eta( B_j\times (s_{k-1},s_k] ) = \cl_{k,j}, 0\le j\le J,0\le k\le K \Bigg]
\\ &=& 
\sum_{\cl\in \tilde \Omega }    \prod_{k}^K \, \prod_j^J \, { \nu(B_j)^{l_{kj}}\tau ^{l_{kj}}\over {l_{kj}}! }
\, \EE \Bigg[ \Big(\sum_k ^ K \sum_{j} ^ J  \sum_{i} ^ I   \sum_{l=m_0}^\infty 
 1_{A^k_{ji} }  1_{\{ l 
 = \cl_{k,j} \}} \, \cl_{k,j} ^ p \lk|  \xi^k_{ji}    
  \Big|^p\rk) ^\frac rp  
  \\ &&\quad\quad \mid \eta( B_j\times (s_{k-1},s_k] ) = \cl_{k,j}, 0\le j\le J,0\le k\le K \Bigg]
  \EEQS
where we put $\tilde \Omega= \otimes_{k=1}^K \otimes _{j=1}^J \NN_0$.  
Note that, since the sum over $l$ starts at $m_0$,  if $l_{kj}<m_0$, 
$  1_{ l= 
 = l_{k,j} \}} =0$. Therefore, for any $\cl$ which contributes to the sum we can put at least $m_0$  factors of the product in the front. 
 Therefore, let 
$\ce(\cl)_{jk}:= l_{kj}-m_0$ if $l_{jk}\ge m_0$ and  $\ce(\cl)_{jk}:= l_{kj}$ otherwise,
and $\# \cl :=\{\cl_{k,j}\ge m_0: 0\le k\le K,0\le j\le J\}$. 
Now, putting the  factors of the product  in front of the summands gives 
 \DEQS
 \\ &\le &  {\max_{1\le j\le J}  \nu(B_j)^{m_0 } \tau^{m_0} } 
\sum_{\cl\in \tilde \Omega }    \prod_{k}^K \, \prod_j^J 
 { \nu(B_j)^{\ce(\cl)_{kj}}\tau ^{ \ce(\cl)
 _{ki}}\over \cl_{ki}!}
\\ && \EE \Bigg[ \Big(\sum_k ^ K \sum_{j} ^ J  \sum_{i} ^ I   \sum_{l=m_0}^\infty 
 1_{A^k_{ji} }  1_{\{ l 
 = \cl_{k,j} \}} \, \cl_{k,j} ^ p\lk|  \xi^k_{ji}    
  \Big|^p\rk) ^\frac rp  
  \\ &&\quad\quad \mid \eta( B_j\times (s_{k-1},s_k] ) = \cl_{k,j}, 0\le j\le J,0\le k\le K \Bigg]
  \EEQS
  \DEQS  &=&
  {\max_{1\le j\le J}  \nu(B_j)^{m_0} \tau^{m_0} } 
\sum_{\cl\in \tilde \Omega }    \prod_{k}^K \, \prod_j^J
 { \nu(B_j)^{\ce(\cl)_{kj}}\tau ^{\ce(\cl)_{ki}}\over \ce(\cl)_{ki}!}  
\\ && \EE \Bigg[ \Big(\sum_k ^ K \sum_{j} ^ J  \sum_{i} ^ I   \sum_{l=m_0}^\infty 
 1_{A^k_{ji} }  1_{\{ l 
 = \cl_{k,j} \}} \,  {\cl_{k,j} ^ p \over \cl_{k,j}^\frac pr  (\cl_{kj}-1)^\frac pr \cdots (\cl_{kj}-m_0)^\frac pr }  \lk|  \xi^k_{ji}    
  \Big|^p\rk) ^\frac rp  
  \\ &&\quad\quad \mid \eta( B_j\times (s_{k-1},s_k] ) = \cl_{k,j}, 0\le j\le J,0\le k\le K \Bigg]
.
  \EEQS
 If $p-m_0 \frac pr \le 1$ there exists a constant $C>0$ such that
 $${l ^ {p}\over l^\frac pr  (l-1)^\frac pr \cdots (l-m_0)^\frac pr } \le C\,(l-m_0),\quad l\ge m_0.
 $$ 
Hence, 
   \DEQS \cdots  &\le &C\,
  {\max_{1\le j\le J}  \nu(B_j)^{m_0} \tau^{m_0} } 
\sum_{\cl\in \tilde \Omega }    \prod_{k}^K \, \prod_j^J
 { \nu(B_j)^{\ce(\cl)_{kj}}\tau ^{\ce(\cl)_{ki}}\over \ce(\cl)_{ki}!}  
\\ && \EE \Bigg[ \Big(\sum_k ^ K \sum_{j} ^ J  \sum_{i} ^ I   \sum_{l=m_0}^\infty 
 1_{A^k_{ji} }  1_{\{ l 
 = l_{k,j} \}} \,  {(l-m_0) }  \lk|  \xi^k_{ji}    
  \Big|^p\rk) ^\frac rp  
  \\ &&\quad\quad \mid \eta( B_j\times (s_{k-1},s_k] ) = \cl_{k,j}, 0\le j\le J,0\le k\le K \Bigg]
.
  \EEQS
    Renumerating gives
   \DEQS \cdots  &\le &C\,
  {\max_{1\le j\le J}  \nu(B_j)^{m_0} \tau^{m_0} } 
\sum_{\cl\in \tilde \Omega }    \prod_{k}^K \, \prod_j^J
 { \nu(B_j)^{\cl_{kj}}\tau ^{\cl_{ki}}\over \cl_{ki}!}  
\\ && \EE \Bigg[ \Big(\sum_k ^ K \sum_{j} ^ J  \sum_{i} ^ I   \sum_{l=0}^\infty 
 1_{A^k_{ji} }  1_{\{ l 
 = \cl_{k,j} \}} \,  {l }  \lk|  \xi^k_{ji}    
  \Big|^p\rk) ^\frac rp  
  \\ &&\quad\quad \mid \eta( B_j\times (s_{k-1},s_k] ) = \cl_{k,j}, 0\le j\le J,0\le k\le K \Bigg]
.
  \EEQS
  Going back gives 
   \DEQS \cdots  &\le &C\,
  {\max_{1\le j\le J}  \nu(B_j)^{m_0} \tau^{m_0} } 
\sum_{\cl\in \tilde \Omega }    \prod_{k}^K \, \prod_j^J
 \PP\lk(  \eta( B_j\times (s_{k-1},s_k] ) = \cl_{k,j}, 0\le j\le J,0\le k\le K 
\rk) 
\\ && \EE \Bigg[ \Big(\sum_k ^ K \sum_{j} ^ J  \sum_{i} ^ I   \sum_{l=0}^\infty 
 1_{A^k_{ji} }  1_{\{ l 
 = \cl_{k,j} \}} \,  {l }  \lk|  \xi^k_{ji}    
  \Big|^p\rk) ^\frac rp  
  \\ &&\quad\quad \mid \eta( B_j\times (s_{k-1},s_k] ) = \cl_{k,j}, 0\le j\le J,0\le k\le K \Bigg]
\EEQS
\DEQS
  &\le &C\,
  {\max_{1\le j\le J}  \nu(B_j)^{m_0} \tau^{m_0} } 
\sum_{\cl\in \tilde \Omega }    \prod_{k}^K \, \prod_j^J
 { \nu(B_j)^{\cl_{kj}}\tau ^{\cl_{ki}}\over \ce(\cl)_{ki}!}  
\\ && \EE \Bigg[ \Big(\sum_k ^ K \sum_{j} ^ J  \sum_{i} ^ I   \sum_{l=0}^\infty 
 1_{A^k_{ji} }  1_{\{ l 
 = l_{k,j} \}} \,  {l  }  \lk|  \xi^k_{ji}    
  \Big|^p\rk) ^\frac rp  
  \\ &&\quad\quad \mid \eta( B_j\times (s_{k-1},s_k] ) = l_{k,j}, 0\le j\le J,0\le k\le K \Bigg]
\EEQS
\DEQS
  &=& C\, \, {\max_{1\le j\le J}  \nu(B_j) ^ {m_0} \,\tau ^ {m_0} }\, 
\EE \lk( \int_0^t \int_{S^\ep} |\xi(z,s)|  \,   \eta( dz, ds  ) \rk) ^\frac rp.
  \EEQS
Using the assumption $ \nu(B_j)\le 1$ for all $j=1,\ldots, J$,
we obtain 
\DEQS
  &=& C\, \, \,\tau ^ {m_0} \, 
\EE \lk( \int_0^t \int_{S} |\xi(z,s)|  \,   \eta( dz, ds  ) \rk) ^\frac rp.
  \EEQS
It remains to investigate the last summand in \eqref{lhs}.
Observe that, first, since $\nu(B_j)\le 1$, $\nu(B_j)^p\le \nu(B_j)$, and, secondly, $\sum_j ^ J\nu(B_j)=\nu(S_\ep)$.
Thus, applying 
the H\"older inequality twice, and then, again, taken into account that $\{ A^k_{ji}, 1\le i\le I\}$ are disjoint,
we put the sum running over $i$ in front of  the brackets.
Doing so, we arrive at
\DEQSZ\label{aaabove}
\\
\nonumber
\lqq{ \vspace{-2cm} \tilde \EE\lk(  
 \sum_k ^K \sum_{j}^J  \sum_{i}^I\lk| 1_{A^k_{ji} }\, \xi^k_{ji}    \nu(B_j)\tau  \rk|^p\rk)^{\frac rp}
\le  
\tilde \EE\lk(   \sum_k ^ K  \sum_{j} ^ J \sum_{i}^I \tau ^p 1_{A^k_{ji} } \lk| \xi^k_{ji}    \rk|^p \nu(B_j) ^ p  \rk)^{\frac rp}
\phantom{nnnnnnn} } &&
\\ &\le &\nonumber
 \tau^ {(p-1)\frac rp } \, \max_j\nu(B_j) ^   {(p-1)\frac rp } \,
\,  \nu(S_\ep) ^ \frac pr \,
\tilde \EE   \sum_k ^ K \tau  \lk( \sum_{j} ^ J  \sum_{i}^I  1_{A^k_{ji} }\lk| \xi^k_{ji}    \rk|^p {\nu(B_j)\over \nu(S_\ep) }  \rk)^{\frac rp}
\\ &\le &\nonumber
 \tau^{(p-1)\frac rp } \, \max_j\nu(B_j) ^   {(p-1)\frac rp }\,  \nu(S_\ep) ^ \frac pr \,
\tilde \EE   \sum_k ^ K \tau \sum_{j} ^ J  \lk(  \sum_{i}^I  1_{A^k_{ji} }\lk| \xi^k_{ji}    \rk|^p \rk)^{\frac rp}{\nu(B_j)\over \nu(S_\ep) }  
\\ &\le& \nonumber
 \tau^{r-\frac rp }  \, \max_j\nu(B_j) ^   {(p-1)\frac rp }\,  \nu(S_\ep)  ^ \frac pr \tilde \EE \sum_k  ^K\tau  \sum_{j} ^ J  \sum_{i} ^ I  1_{A^k_{ji} }\lk| \xi^k_{ji}    \rk|^r   {\nu(B_j)\over \nu(S^\ep) } .
\\ &\le& \nonumber
 \tau^{r-\frac rp }  \, \max_j\nu(B_j) ^   {(p-1)\frac rp }\,  \nu(S_\ep)  ^ {\frac pr-1}
 \tilde \EE \sum_k  ^K\tau  \sum_{j} ^ J  \sum_{i} ^ I  1_{A^k_{ji} }\lk| \xi^k_{ji}    \rk|^r   {\nu(B_j) } .
\EEQSZ
The RHS of \eqref{aaabove}
is bounded by
$$
\nu(S_\ep) ^{ \frac pr -1} \, \tau^{r-\frac rp}\, \int_0 ^ T \int_{S_\ep} \EE\lk| \xi(s,z)\rk| ^ r\nu(dz)\, ds.
$$
Collecting all together we arrive  at
\DEQS 
\lqq{\hspace{2cm}\EE \lk| \int_0^T  \int_{S^\ep}  \xi(s, x)\tilde{\eta}(dx,ds)\rk|^r \le 
}&&\\&& C_p(E) 2 ^ {r+\frac rp}    \,(1+\tau)\nonumber
 \EE\lk( \int_0^T \int_{S}  |\xi(s,x)|^p\, \eta(dx,ds)\rk)^\frac rp 
 \\&&{} +
 C_p(E) 2 ^ {r+\frac rp}  \nu(S^\ep) ^{ \frac pr -1} \tau^{p-1} \int_0 ^ T \int_ {S}  \EE\lk| \xi(s,z)\rk| ^ r\nu(dz)\, ds
.
\EEQS 
It remains to take the limit. But, since $r>p$, $\nu(S_\ep) ^{ \frac pr-1}\to 0$ as $\ep\to 0$, we obtain  
\DEQSZ\label{fffinal}
\lqq{\hspace{1cm}\EE \lk| \int_0^T  \int_{S}  \xi(s, x)\tilde{\eta}(dx,ds)\rk|^r \le 
}&&\\&& C_p(E) \, 2 ^ {r+\frac rp}    \,(1+\tau)\, \nonumber
 \EE\lk( \int_0^T \int_{S}  |\xi(s,x)|^p\, \eta(dx,ds)\rk)^\frac rp 
\EEQSZ
\noindent
In the second step we assume that $\xi\in \CM^r(\RR_+;L^r(S,\nu;E))$ is approximated by a sequence of 
simple functions $(\xi_n)_{n\in\NN}$,
where we take in time the shifted Haar projection of order $n$ and in space an arbitrary simple function.
Therefore, let $\xi_n$, $n\in\NN$, be a sequence of simple functions, such that $\xi_n$ is constants on the 
dyadic intervals $[2^{-n}k, 2^{-n}(k+1))$ and $\xi_n\to\xi$ in $\CM^r(\RR_+;L^r(S,\nu;E))$.
Substituting $\xi_n$ in \eqref{fffinal} we obtain 
\DEQS
\lqq{\EE \lk| \int_0^T  \int_{S_\ep} \xi(s, x)\tilde{\eta}(dx,ds)\rk|^r \le 
 C_p(E)\, 2 ^ {r+\frac rp}  \,(1+ 2^{-n})\times } &&\\&&
\\ &&\lk\{   \EE\lk( \int_0^T \int_{S} |\xi(s,x)|^p\, \eta(dx,ds)\rk)^\frac rp 
+  \nu(S_\ep) ^{ \frac pr -1} 2^{-n(p-1)} \int_0 ^ T \int_S \EE\lk| \xi(s,z)\rk| ^ r\nu(dz)\, ds  \rk\}
. 
\EEQS
Taking the limit for $n$ to infinity we get
\DEQSZ\label{lllast}
\\\nonumber\EE \lk| \int_0^T  \int_{S_\ep}  \xi(s, x)\tilde{\eta}(dx,ds)\rk|^r \le 
 C_p(E)\, 2 ^ {r+\frac rp}\, \EE\lk( \int_0^T \int_{S} |\xi(s,z)|^p\, \eta(dz,ds)\rk)^\frac rp
.
\EEQSZ
In the third and last step we let $\ep$ converge 
 to  zero. Since the RHS of \eqref{lllast} is independent of $\ep$, the  assertion is shown.
\end{proof}

\begin{proof}
[Proof of Inequality (iii):] 
The proof is a generalisation of the proofs of Bass and Cranston
\cite[Lemma 5.2]{MR88b:60113}
or Protter and Talay \cite[Lemma 4.1]{MR98c:60063}.
It follows from Inequality (ii) 
that 
\DEQS
\lqq{ \EE\sup_{0<s\le t} \lk|   \int_{0 } ^t\int_S \xi(s;z)\tilde\eta(dz;ds)
\rk| ^{p ^n}} &&\\
 &\le&  C_p(E) \, 2 ^ {p ^ n+p ^ {n-1}}  (m_0-1)^ {(p-1)\frac rp} \:
 \EE \lk(    \int_{0 } ^t
\int_S |\xi(s;z)| ^p \; \eta(dz;ds )\rk)^{p ^{n-1}}.
\EEQS
Simple calculations lead to
\DEQSZ\label{hhhhmuss}\nonumber 
\lefteqn{ \EE\sup_{0<s\le t} \lk|   \int_{0 } ^t\int_S \xi(s;z)\tilde \eta(dz;ds)
\rk| ^{p ^n}\le   C_p(E) \,  2 ^ {p ^ n+2 p ^ {n-1}} \,  (m_0-1)^ {(p-1)\frac rp}\, \times 
}
&&
\\\nonumber
&\le &
\lk( \EE \lk(
       \int_{0 } ^t\int_S |\xi(s,z)|^p  \;\tilde \eta (dz;ds)  \rk)^{p ^{n-1}}
 + \EE \lk( \int_{0 } ^t\int_S  |\xi(s,z)|^p  \;\gamma (dz;ds)\rk)^{p ^{n-1}}
\rk)
\EEQSZ
Let us define
$$
L(t) ^{(0)} := \int_{0 } ^t \int_S |\xi(s,z)|^p  \; \tilde\eta 
(dz;ds),\quad t\ge 0.
$$
Then,
\DEQSZ\label{hhhhmus}\nonumber 
\lefteqn{ \EE\sup_{0<s\le t} \lk|   \int_{0 } ^t\int_S \xi(s;z)\tilde \eta(dz;ds)
\rk| ^{p ^n}\le   C_p(E) \,  2 ^ {p ^ n+2 p ^ {n-1}} \,  (m_0-1)^ {(p-1)\frac rp}\,\times   
}
&&\\
&&
\; \lk( \EE  |L(t) ^{(0)}| ^{p ^{n-1}}
 + \EE \lk( \int_{0 } ^t\int_S   |\xi(s,z)|^p  \: \nu(dz)\: ds\rk)^{p ^{n-1}}
\rk).
\EEQSZ
If $n$ equals $2$ we are done. In particularly,
Inequality (i) for $r=p$ gives
%
\DEQSZ\label{result}\nonumber
\EE\lk| L(t) ^{(0)}\rk| ^p  &\le &   \EE\lk|
\int_{0 } ^t \int_S |\xi(s,z)|^p  \; \tilde \eta (dz;ds)\rk| ^p
\\ &\le &  2^{2-p} \, \EE
\int_{0 } ^t \int_S  |\xi(s,z)| ^{p^2}   \; \nu 
(dz) \, ds.
\EEQSZ
Substituting \eqref{result} in \eqref{hhhhmus}  we get for $n=2$
\DEQS
\lefteqn{ \EE\sup_{0<s\le t} \lk|   \int_{0 } ^t\int_S \xi(s;z)\tilde \eta(dz;ds)
\rk| ^{p ^2}\le C_p(E)\, 2 ^ {p ^ 2+2p}\, \times 
}
&&
\\ &&
 \lk(  2^{2-p} \,\EE\int_{0 } ^t  \int_S |\xi(s,z)| ^{p ^2}   \; \nu (dz)\: ds 
 + \EE \lk( \int_{0 } ^t\int_S  |\xi(s,z)|^p  \: \nu(dz)\: ds\rk)^{p }
\rk).
\EEQS
Now, Inequality (iii) is proved, provided $n=2$.
If $n>2$, then we have to continue. Let 
$$
L(t) ^{(r)} := \int_{0 } ^t \int_\RR |\xi(s,z)| ^{p ^{r+1}} \;\tilde \eta 
(dz;ds)
\quad \mbox{ for } r=1,\ldots, n.
$$
Inequality (ii) 
leads to 
\DEQS
\lefteqn{
\EE\lk| L(t) ^{(r)}\rk| ^{p  ^{m}} =   
 \EE \lk( \int_0 ^t \int_\RR
|\xi(s,z)|^{p ^{r+1}}\tilde  \eta\: (dz;ds) \rk) ^{ p  ^{m}}
} &&
\\
&\le &
C_p(E)\, 2 ^ {p ^ m+p ^ {m-1}} \,  (m_0-1)^ {(p-1)\frac rp}  \; \EE \lk( \int_0 ^t \int_\RR
|\xi(s,z)| ^{p ^{r+2}}
\eta  (dz;ds)\rk) ^{ p  ^{m-1}}.
\EEQS
Since $\gamma=\nu\times \lambda$, simple calculations lead to
\DEQSZ\label{inlpm}\nonumber
\lefteqn{
\EE\lk| L(t) ^{(r)}\rk| ^{p  ^{m}} \le  C_p(E)\, 2 ^ {p ^ m+p ^ {m-1}}  } 
&&
\\\nonumber
& &
  \; \EE \lk( \int_0 ^t \int_\RR
|\xi(s,z)|  ^{p ^{r+2}}
\tilde \eta  (dz;ds)+
\int_0 ^t \int_S
|\xi(s,z)|  ^{p ^{r+2}}
\gamma  (dz;ds)\rk) ^{ p  ^{m-1} }
\\\nonumber 
&\le &
  C_p(E)\, 2 ^ {p ^ m+2 p ^ {m-1} }   (m_0-1)^ {(p-1)\frac rp}  \phantom{\Big|}
\\ &&\hspace{1cm}  \;\lk(  \EE  \lk|L(t) ^{(r+1)}\rk| ^{p ^{m-1}}
 +  \EE\lk( \int_{0 } ^t\int_S  |\xi(s;z)| ^{p ^{r+2}} \;\nu (dz)\, ds\rk)^{p ^{m-1}}\rk)
.
\EEQSZ
Starting with $ L(t) ^{(0)}$ and 
iterating the calculation done in (\ref{inlpm}) lead for arbitrary $n$ to
\DEQSZ\label{theatabove}\nonumber
\lqq{ \EE\lk| L(t) ^{(0)}\rk| ^{p  ^{n}}
\le   C_p(E)\, 2 ^ {p ^ n+2 p ^ {n-1} }   (m_0-1)^ {(p-1)\frac rp} }
&&
\\ && \hspace{1cm}  \;\lk(  \EE  \lk|L(t) ^{(1)}\rk| ^{p ^{n-1}}\nonumber
 +  \EE\lk( \int_{0 } ^t\int_S  |\xi(s;z)| ^{p } \;\nu (dz)\,ds\rk)^{p ^{n-1}}\rk)
\\ &\le &\nonumber
  C_p(E)\, 2 ^ {p ^ n+2 p ^ {n-1} }    \, 2 ^ {p ^ {n-1}+2 p ^ {n-2} }  (m_0-1)^ {(p-1)\frac rp} \;  \EE  \lk|L(t) ^{(2)}\rk| ^{p ^{n-2}}
\\&&\nonumber
{} + 
\sum_{i=1} ^2 \bar C(i) \,  \EE\lk( \int_{0 } ^t\int_S  |\xi (s;z)| ^{p ^{i}} \;\nu (dz)ds\rk)^{p ^{n-i}} 
\\
&\le &  
C_p(E)  2 ^ {p ^ n+2 p ^ {n-1} }   (m_0-1)^ {(p-1)\frac rp}  \; \EE  \lk|L(t) ^{(n-1)}\rk| ^{p }+
\\ && \sum_{i=1} ^{n-1} \bar C(i) 
\,     \EE \lk( \int_{0  } ^t \int_S  | \xi(s,z)| ^{p ^{i}}\; \nu(z) ds\rk) ^{p^{n-i}},
\EEQSZ
where $\bar C(0)=1$ and $\bar C(i)= \bar C(i-1)2 ^ { p ^ {n-i+1}+2p ^ {n-1}} (m_0-1)^ {(p-1)\frac rp}\, m(n-i)  $.
Finally, 
Inequality (i) gives 
\DEQSZ \label{sub01}
\EE\lk| L(t) ^{(n-1)}\rk| ^{p  } &= &
 \EE \lk| \int_{0 } ^t\int_S |\xi(s,z)|  ^{p ^{n}} \; \tilde\eta (dz;ds)\rk| ^p
\nonumber
\\
&\le &
2 ^{2-p} \;\EE \int_{0} ^t\int_S |\xi(s,z)|  ^{p ^{n+1}} \nu(dz)\, ds.
\EEQSZ
Thus, substituting \eqref{sub01} in \eqref{theatabove} we arrive at 
\DEQS
\lefteqn{ \EE\sup_{0<s\le t} \lk|   \int_{0 } ^t\int_S \xi(s;z)\tilde \eta(dz;ds)
\rk| ^{p ^n}\le \;C_p(E)  2 ^ {p ^ n+2 p ^ {n-1} }   (m_0-1)^ {(p-1)\frac rp} 
}
&&\\
& &
\; \lk( \EE  |L(t) ^{(0)}| ^{p ^{n-1}}
 + \EE \lk( \int_{0 } ^t\int_S   |\xi(s,z)|^p  \: \nu(dz)\: ds\rk)^{p ^{n-1}}
\rk)
\\&
\le&
  2^{2-p} \;\sum_{i=1} ^n\bar C(i)\,
     \EE \lk( \int_{0  } ^t \int_S  | \xi(s,z)| ^{p ^{i}}\; \nu(z) ds\rk) ^{n-i}.
\EEQS
\end{proof}

\appendix

\section{Discrete Inequalities of Burkholder-Davis-Gundy type}

\del{Sources \cite{burkholdergundy}

Before starting we will introduce the following notation. 
For any $E$-valued martingale over a probability space $(\Omega,\CF,\PP)$, let
$\{\CF^M_n\}_{n=1}^N$ the natural filtration induced by $\{M_{n}\}_{n=0}^N$, i.e.\
$\CF^M_n :=\sigma( M_k, k\le n)$, $n\in\NN$. By $\{m_{n}\}_{n=0}^N$ we denote the sequence of martingale differences, i.e.\
$m_n:= M_{n}-M_{n-1}$, $0\le n\le N $ (remember $M_{-1}:=0$), and by $M^\ast $ the sequence 
$\{M^\ast_n\}_{n=0}^N$ given by $M_n^\ast := \sup_{k\le n} |M_n|$, $0\le n\le N$. Let 
$$S_{n,p}(M) := \lk( \sum_{k\le n} |m_k|_E^p\rk)^\frac 1p,\quad 0\le n\le N
$$
with $S_p(M):= S_{N,p}(M)$, and
$$s_{n,p}(M) := \lk( \sum_{k\le n} \EE [|m_k|_E^p\mid \CF_{k-1} ]\rk)^\frac 1p,\quad 0\le n\le N
$$
again, with $s_p(M):= s_{N,p}(M)$.
}

In this section we collect some basic informations about the     martingale type $p$, $p\in [1,2]$, Banach spaces.
For more details we refer to \cite[Appendix C]{maxreg} or \cite{MR1313905}.
A property which encompasses both, the UMD property and the type $p$ property is the martingale type $p$ property.

\begin{definition}\label{def-mart}
Assume that $p\in [1,2]$ is fixed. A Banach space $E$ is of  martingale type $p$  iff
there exists a constant $L_{p}(E)>0$ such that for all
$E$-valued  finite martingale $\{M_{n}\}_{n=0}^N$  the
following inequality holds
\DEQS \sup_{0\le n\le N } \mathbb{E} | M_{n} |_E ^{p} \le  L_{p}(E)\,
\mathbb{E}  \sum_{n=0}^N  | M_{n}-M_{n-1} |_E ^{p},
\EEQS 
 where as  usually, we put  $M_{-1}=0$.
\end{definition}


A useful tool in the theory of martingales is the Doob's maximal inequality.
The simplest version says that all real valued non-negative submartingales $\{M_{n}\}_{n=0}^N$  satisfy
the  inequality,
$$
\lambda  \PP\lk( \sup_{0\le  k\le n} |M_k|>\lambda\rk) \le \EE\, 1_{\max _{k\le n} M_k\ge \lambda } \, |M_n| ,\quad 1\le n\le N,
$$
and, hence, satisfy 
\DEQSZ\label{doob-simple}
\lambda  ^ p \, \PP\lk( \sup_{0\le  k\le n} |M_k| ^ p >\lambda\rk) \le \EE  |M_n| ^ p ,\quad 1\le n\le N.
\EEQSZ
Now, one gets immediately that all real valued  non-negative submartingales $\{M_{n}\}_{n=0}^N$ satisfy
\DEQSZ\label{burkholder-simple}
\EE |M_n| ^ p \le  \EE \sup_{0\le  k\le n} |M_k| ^ p \le q^ p \, \EE |M_n| ^ p,
\EEQSZ
where $q$ is the conjugate exponent to $p$.
From the last version of Doob's maximal inequality  we can derive 
the following corollary.

\begin{corollary}
Let $p\in [1,2]$ and let $E$ be a Banach space of martingale type $p$.
Then there exist a constant $C=C_p(E)$ such that    for all
$E$-valued  finite martingale $\{M_{n}\}_{n=0}^N$  the
following inequality holds
\DEQSZ
 \mathbb{E}\sup_{0\le n\le N } | M_{n} |_E ^{p}  &\le &  C\,
\mathbb{E}  \sum_{k=0}^N  | M_{n}-M_{n-1} |_E ^{p},
\label{eqn-2.2}
\EEQSZ 
 where as  usually, we put  $M_{-1}=0$.
\end{corollary}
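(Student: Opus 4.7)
The plan is to combine the scalar Doob-type maximal estimate \eqref{burkholder-simple} with the defining property of a martingale type $p$ Banach space (Definition~\ref{def-mart}). The key preliminary observation is that, for any $E$-valued martingale $\{M_n\}_{n=0}^N$, the real-valued process $\{|M_n|_E\}_{n=0}^N$ is a non-negative submartingale: this follows from Jensen's inequality applied to the conditional expectation, since $|\cdot|_E$ is convex and $\mathbb{E}[M_n\mid\mathcal{F}_{n-1}]=M_{n-1}$. Hence $\{|M_n|_E\}$ is in the scope of the scalar Doob inequality \eqref{burkholder-simple}.

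For $p\in(1,2]$, I would apply \eqref{burkholder-simple} to the non-negative submartingale $\{|M_n|_E\}$ to obtain
\begin{equation*}
\mathbb{E}\sup_{0\le n\le N}|M_n|_E^p \;\le\; q^p\,\mathbb{E}|M_N|_E^p, \qquad q=\tfrac{p}{p-1}.
\end{equation*}
Then I would use the martingale type $p$ property of $E$ directly on the right-hand side, giving
\begin{equation*}
\mathbb{E}|M_N|_E^p \;\le\; L_p(E)\,\mathbb{E}\sum_{k=0}^N|M_k-M_{k-1}|_E^p.
\end{equation*}
Chaining these two bounds yields the desired inequality with $C=C_p(E):=q^p L_p(E)$.

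The boundary case $p=1$ falls outside Doob's $L^p$ estimate (the conjugate exponent degenerates to $\infty$), but here the conclusion is in fact trivial: by telescoping and the triangle inequality,
\begin{equation*}
\sup_{0\le n\le N}|M_n|_E \;\le\; \sum_{k=0}^N|M_k-M_{k-1}|_E,
\end{equation*}
so one may take $C=1$, and no appeal to the martingale type property is needed.

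I do not foresee a genuine obstacle: both ingredients are already stated in the appendix, so the argument is essentially a one-line composition of \eqref{burkholder-simple} with Definition~\ref{def-mart}. The only point deserving care is the distinction between the two regimes $p=1$ and $p\in(1,2]$, and the explicit tracking of the constant $C_p(E)=(p/(p-1))^p L_p(E)$ in the latter case.
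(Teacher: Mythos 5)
Your proposal is correct and follows essentially the same route the paper intends: the paper derives this corollary directly from the scalar Doob $L^p$ maximal inequality \eqref{burkholder-simple} applied to the non-negative submartingale $\{|M_n|_E\}$, composed with Definition~\ref{def-mart}. Your treatment is in fact slightly more careful than the paper's one-line derivation, since you note the Jensen step making $|M_n|_E$ a submartingale and you handle the degenerate case $p=1$ separately via the triangle inequality.
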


\del{\begin{theorem}(Kahane-Khintchin inequality)
Let $(r_n)$ ne a Rademacher sequence, i.e.\ a sequence of independent random variables with $\PP( r_n=-1)=\PP(r_n=+1)=\frac 12$.
For all $1\le p,q<\infty$ there exists a constant $C_{p,q}$, depending on $p$ and $q$, such that
for all finite sequences $x_1,\ldots x_N\in E$ we have
$$ \lk( \EE \lk| \sum_{n=1}^N r_n x_n \rk|^p\rk)^\frac 1p \le C_{p,q} \lk( \EE \lk| \sum_{n=1}^N r_n x_n \rk|^q\rk)^\frac 1q.
$$ 
\end{theorem}
}

Nevertheless, in the proof of inequality (ii) we used a stronger inequality, namely, 
we supposed that there exists a constant $C$ such that
for all
$E$-valued  finite martingales $\{M_{n}\}_{n=0}^N$  the
following inequality holds
\DEQSZ  
  \mathbb{E} | \sum_n  M _n   | ^{r} &\le & C
 \mathbb{E}  \lk( \sum_{n=0}^N  | M_{n}-M_{n-1}  | ^{p}\rk) ^ \frac rp.
\label{eqn-2.3}\EEQSZ

This stronger inequality we can derive from  a generalisation of Doob's maximal inequality.
But before showing inequality \eqref{eqn-2.3}, since it is interesting on its own, we state the generalization of  Doob's maximal inequality.
To be more precise, in the Doob's maximal inequality we can replace the square by a 
convex, non decreasing and continuous function  $\Phi:[0,\infty)\to \RR$
 with $\Phi(0)=0$. In addition, 
$\Phi$ has to satisfy  {\em the growth condition} (see Appendix)

\begin{proposition}\label{doob}(Garsia \cite[p.173]{garsia})
For all convex,  non decreasing, and continuous function $\Phi:[0,\infty)\to \RR$ with $\Phi(0)=0$ and 
satisfying the growth condition, 
there exists a constant $C$ such that 
for all   non-negative real valued sub-martingales   $\{X_{n}\}_{n=0}^N$ with  $X_0=0$
we have
$$
\EE \Phi\Big( \sup_{1\le n\le N} X_n\Big) \le C\, \EE \Phi\big( X_N \big).
$$  
To be precise, $C=4\, (C_\Psi ^ \ast-1)$ where $\Psi $ is the conjugate convex function to $\Phi$.
\end{proposition}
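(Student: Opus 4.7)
The plan is to proceed via a layer-cake representation combined with the classical Doob maximal inequality \eqref{doob-simple}. Write $X^\ast:=\sup_{1\le n\le N}X_n$. Since $\Phi$ is non-decreasing and absolutely continuous (as a finite convex function on $[0,\infty)$) with $\Phi(0)=0$, one has $\Phi(X^\ast)=\int_0^{X^\ast} d\Phi(\lambda)$, so Tonelli gives
$$
\EE \Phi(X^\ast) = \int_0^\infty \PP(X^\ast>\lambda)\, d\Phi(\lambda).
$$

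Next I would invoke Doob's inequality in the form $\lambda\, \PP(X^\ast>\lambda)\le \EE[X_N\, 1_{\{X^\ast>\lambda\}}]$, valid since $\{X_n\}$ is a non-negative submartingale. Substituting this bound and switching the order of integration by Fubini yields
$$
\EE \Phi(X^\ast) \le \EE \Big[ X_N \int_0^{X^\ast} \lambda^{-1}\, d\Phi(\lambda) \Big].
$$
Denoting $H(x):=\int_0^x \lambda^{-1}\, d\Phi(\lambda)$, the problem reduces to estimating $\EE[X_N\, H(X^\ast)]$.

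The core analytic step is then Young's inequality between $\Phi$ and its conjugate Young function $\Psi$: for every $\eps>0$,
$$
a\, b \le \Phi(a/\eps) + \Psi(\eps\, b),\qquad a,b\ge 0.
$$
Applied with $a=X_N$ and $b=H(X^\ast)$ this gives
$$
\EE\Phi(X^\ast)\le \EE \Phi(X_N/\eps) + \EE \Psi(\eps\, H(X^\ast)).
$$
The growth condition on $\Phi$ is precisely what is needed to assert that $\Psi(H(x))\le (C_\Psi^\ast-1)\Phi(x)$, so that for a suitable choice of $\eps$ the second term becomes a constant strictly less than one times $\EE\Phi(X^\ast)$, which can then be absorbed into the left-hand side. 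Tracking constants and optimizing in $\eps$ should reproduce exactly the stated constant $C=4(C_\Psi^\ast-1)$.

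The main obstacle is to establish the sharp inequality $\Psi(H(x))\le (C_\Psi^\ast-1)\Phi(x)$; this is the place where the growth condition and the definition of $C_\Psi^\ast$ do the real work, and it requires a careful use of convex duality between $\Phi$ and $\Psi$. A secondary technical point is to legitimise the absorption step: if $\EE\Phi(X^\ast)=\infty$ a priori, one cannot subtract the term on the right, so one should first apply the bound to the stopped submartingale $X_{n\wedge \tau_M}$ with $\tau_M=\inf\{n:X_n\ge M\}$ (for which $X^\ast$ is bounded and all quantities finite), and then recover the general case by monotone convergence as $M\uparrow\infty$.
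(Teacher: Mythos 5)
Your skeleton (Doob's weak--type inequality integrated against the measure induced by $\Phi$, then Young's inequality and an absorption argument) parallels the paper's, but the intermediate quantity you choose creates a genuine gap at precisely the step you yourself flag as ``the main obstacle''. Integrating $d\Phi(\lambda)$ against $\PP(X^\ast>\lambda)$ forces you to pair $X_N$ with $H(X^\ast)$, where $H(x)=\int_0^x\lambda^{-1}\,d\Phi(\lambda)$, and you then need $\Psi(H(x))\le (C_\Psi^\ast-1)\Phi(x)$. This inequality is nowhere proved in your proposal, and as stated it is false: for $\Phi(t)=t^3$ one has $\phi(t)=3t^2$, $H(x)=\tfrac32 x^2$, $\Psi(s)=\tfrac{2}{3\sqrt3}s^{3/2}$ and $C_\Psi^\ast=\tfrac32$, so $\Psi(H(x))=2^{-1/2}x^3$ while $(C_\Psi^\ast-1)\Phi(x)=\tfrac12 x^3$. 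A weaker bound $\Psi(H(x))\le K\,\Phi(x)$ does hold, but establishing it needs both the growth condition on $\Phi$ and the implicit condition $C_\Psi^\ast<\infty$ on the conjugate; the latter is also what guarantees $H(x)<\infty$ in the first place (if $\phi(0+)>0$, e.g.\ $\Phi(t)=t^2+t$, then $H\equiv\infty$), and the constant it produces is not the one you claim to recover by ``optimizing in $\eps$''.

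The paper avoids this difficulty by integrating $t\,d\phi(t)$ rather than $d\Phi(t)$: Garsia's identity (Theorem \ref{theorem.1.}) gives $\EE\int_0^{X^\ast_N}t\,d\phi(t)=\EE\, X_N\,\phi(X^\ast_N)$, and since $\int_0^v t\,d\phi(t)=\Psi(\phi(v))$ exactly, the quantity to be controlled becomes $\EE\Psi(\phi(X^\ast_N))$. Young's inequality together with the growth condition yields $\EE\Psi(\phi(X^\ast_N))\le c\,\EE\Phi(X_N)+\tfrac12\EE\Psi(\phi(X^\ast_N))$, the last term is absorbed, and the return to $\EE\Phi(X^\ast_N)$ is immediate from the Young equality \eqref{fig:01}, $u\phi(u)=\Phi(u)+\Psi(\phi(u))$, combined with the definition \eqref{c-phi} applied to $\Psi$, which give $\Phi(u)\le(C_\Psi^\ast-1)\Psi(\phi(u))$. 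To complete your argument you should either switch to this intermediate quantity or supply a full proof of a correctly weakened bound on $\Psi(H(x))$. Your final remark --- that the absorption step requires first stopping at $\tau_M=\inf\{n: X_n\ge M\}$ to ensure finiteness and then letting $M\uparrow\infty$ --- is a valid point that the paper's own proof passes over in silence.
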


But before starting with the proof we state a  result of Garsia, i.e.\ \cite[Theorem 2.1]{garsia}.
\begin{theorem}\label{theorem.1.}
If   $\{X_{n}\}_{n=0}^N$ is a  nonnegative real valued sub-martingale  with  $X_0=0$ and if
 $\phi:[0,\infty)\to [0,\infty)$ is a non decreasing
function, then
$$
\EE \int_0 ^ {X ^ \ast_n} t\, d\phi(t) = \EE X_n \phi( X ^ \ast_n),\quad n\in\NN, 
$$
where $X ^ \ast _n = \sup_{k\le n} |X_k|$, $n\in\NN$.
\end{theorem}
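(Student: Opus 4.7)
The plan is a two-step argument. First, a Fubini reduction rewrites both sides as integrals against $d\phi(t)$ on $(0,\infty)$; then, an optional-sampling argument at the level-crossing times of $X$ identifies the resulting integrands.

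First step. Without loss of generality take $\phi(0)=0$, since shifting $\phi$ by a constant contributes the same quantity $\phi(0)\EE X_n$ to both sides, while $\int_0^{X^\ast_n} t\,d(\phi-\phi(0))(t)=\int_0^{X^\ast_n} t\,d\phi(t)$. Then $\phi(a)=\int_0^a d\phi(s)$ for every $a\ge 0$, and Fubini applied to the nonnegative integrands yields the layer-cake representations
\begin{align*}
\EE\!\int_0^{X^\ast_n}\!t\,d\phi(t) &= \int_0^\infty t\,\PP(X^\ast_n\ge t)\,d\phi(t),\\
\EE\bigl[X_n\,\phi(X^\ast_n)\bigr] &= \int_0^\infty \EE\bigl[X_n\,\mathbf 1_{\{X^\ast_n\ge t\}}\bigr]\,d\phi(t).
\end{align*}
The asserted identity thus reduces to showing that the two integrands $t\,\PP(X^\ast_n\ge t)$ and $\EE[X_n\,\mathbf 1_{\{X^\ast_n\ge t\}}]$ agree after integration against $d\phi(t)$.

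Second step. For each $t>0$, introduce the bounded stopping time $\tau_t:=\inf\{k\ge 0:X_k\ge t\}\wedge n$, so that $\{X^\ast_n\ge t\}=\{\tau_t\le n\}$ and $X_{\tau_t}\ge t$ on that event. Optional sampling applied to the nonnegative submartingale $X$ at the bounded stopping times $\tau_t\le n$ yields the chain
\[
t\,\PP(X^\ast_n\ge t)\;\le\;\EE\bigl[X_{\tau_t}\,\mathbf 1_{\{\tau_t\le n\}}\bigr]\;\le\;\EE\bigl[X_n\,\mathbf 1_{\{X^\ast_n\ge t\}}\bigr].
\]
Both gaps---the overshoot $X_{\tau_t}-t$ on the left and the residual $X_n-X_{\tau_t}$ on the right---are to be expressed, via a reverse Fubini exchange and summation by parts, as sums over time steps of contributions of the form $\EE[(X_k-X_{k-1})\,\mathbf 1_{\{\tau_t=k\}}]\bigl(\phi(X^\ast_k)-\phi(X^\ast_{k-1})\bigr)$ that cancel once aggregated over all $t\in(0,\infty)$. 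To make this rigorous I would first verify the identity when $\phi$ is a step function supported on the dyadic rationals (where the Stieltjes integrals reduce to finite sums and the cancellation is explicit level by level), then pass to the limit by monotone convergence, using a cutoff at large $t$ justified by the $L^1$-boundedness of $X^\ast_n$ from Doob.

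The main obstacle is precisely this cancellation of the overshoot and residual in the second step: pointwise in $t$ the chain is a strict one-sided estimate, and sharpening it to the stated equality of the $d\phi$-integrated quantities requires an intricate summation-by-parts over the time indices $k$ tied to the level-crossing structure of $\tau_t$. This bookkeeping is the core content of Garsia's argument in \cite[Theorem 2.1]{garsia} and carries through the discrete analogue of the Itô identity $d(X\phi(X^\ast))=\phi(X^\ast)\,dX+X\,d\phi(X^\ast)$ combined with the fact that $d\phi(X^\ast)$ is supported on $\{X=X^\ast\}$.
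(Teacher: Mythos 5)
Your Step 1 (Fubini) together with the first half of Step 2 (the chain $t\,\PP(X^\ast_n\ge t)\le \EE[X_{\tau_t}\mathbf{1}_{\{X^\ast_n\ge t\}}]\le \EE[X_n\mathbf{1}_{\{X^\ast_n\ge t\}}]$, i.e.\ Doob's maximal inequality at the level-crossing time) is precisely Garsia's argument, and it proves the \emph{inequality}
\[
\EE \int_0^{X^\ast_n} t\, d\phi(t)\ \le\ \EE\lk[ X_n\,\phi(X^\ast_n)\rk],
\]
which is the actual content of \cite[Theorem 2.1]{garsia} and is all that the paper uses downstream (it feeds into Young's inequality in the proof of Proposition \ref{doob}, where only an upper bound is needed). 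The paper itself gives no proof of this theorem, only the citation, so up to this point you have in effect reconstructed the reference's proof. Two small slips: with $\tau_t:=\inf\{k:X_k\ge t\}\wedge n$ one has $\{\tau_t\le n\}=\Omega$, so the event $\{X^\ast_n\ge t\}$ must be expressed through the unstopped hitting time; and subtracting $\phi(0)$ changes only the right-hand side (by $\phi(0)\,\EE X_n$), not both sides, which is harmless for the inequality but is not a legitimate reduction for an equality.

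The genuine gap is the second half of Step 2. The proposed cancellation of the overshoot against the residual cannot succeed: for each fixed $t$ the two gaps $\EE[(X_{\tau_t}-t)\mathbf{1}_{\{X^\ast_n\ge t\}}]$ and $\EE[(X_n-X_{\tau_t})\mathbf{1}_{\{X^\ast_n\ge t\}}]$ are both nonnegative (the first because $X_{\tau_t}\ge t$ on the event, the second by optional sampling), so integrating against $d\phi$ can only preserve the one-sided estimate; equality would force both gaps to vanish $d\phi\otimes\PP$-almost everywhere, and they do not. Indeed the statement as printed, with ``$=$'', is false: take $N=n=1$, $X_0=0$, $X_1\equiv 1$ and $\phi(t)=t$; then $X^\ast_1=1$, the left-hand side is $\int_0^1 t\,dt=\tfrac12$ while the right-hand side is $1$. (The continuous-time heuristic you invoke --- that $d\phi(X^\ast)$ is carried on $\{X=X^\ast\}$ with no overshoot --- is exactly what fails in discrete time.) So the correct resolution is not a finer summation by parts but to replace ``$=$'' by ``$\le$'' in the theorem (as in Garsia, and as suffices for every subsequent use in the paper); your first two displays already constitute a complete proof of that corrected statement.
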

Now, we can start with the proof.
\begin{proof}[Proof of Proposition \ref{doob}]
Now, since $\Psi(\phi(t))=\int_0^t s \, d\psi(s)$, we obtain
$$
\EE \Psi( \phi( X^\ast_n)) = \EE \int_0^{X^\ast_n} s \, d\psi(s).
$$
Theorem \ref{theorem.1.} gives
$$
\EE \Psi( \phi( X^\ast_n)) = \EE  X_n \phi( {X^\ast_n}) .
$$
The Young inequality gives
\DEQSZ\label{eq:mmm1}
\EE \Psi( \phi( X^\ast_n)) = {2 }\, \EE \Phi( X_n) +\frac 12\, \EE \Psi( \phi({X^\ast_n})) .
\EEQSZ
Subtracting $\frac 12\, \EE \Psi( \phi({X^\ast_n})$ on both sides of inequality \eqref{eq:mmm1} gives
$$
\frac 12\, \EE \Psi( \phi( X^\ast_n))  \le {2}\, \EE \Phi( X_n).
$$
By property \ref{convex} we get 
the assertion.
\end{proof}

\del{\begin{proposition}(\cite[Proposition 3.1, for martingale p.221]{pisier-1206}) 
Let $E$ be a Banach space of type $p$ and cotype $q$.
Then there exists constants $c,C$ such that for any 
$E$-valued  finite sequence of independent random variables $\{x_{n}\}_{n=0}^N$ and any $0<r<\infty$ the
following inequality holds
\begin{equation} c  \mathbb{E} \lk( \sum_n | x _n  | ^{q}\rk)  ^ \frac rq
\le  \mathbb{E} | \sum_n  x _n   | ^{r} \le  C
 \mathbb{E}  \lk( \sum_{n=0}^N  | x_n  | ^{p}\rk) ^ \frac rp,
\label{eqn-2.1}\end{equation}
 where as  usually, we put  $M_{-1}=0$.
\end{proposition}}

Assume $E$ is of martingale type $p$. 
Now, from the 
Definition \ref{def-mart} and the generalized 
Doob maximal inequality, i.e.\ Proposition \ref{doob},
we can show that the 
Burkholder-Davis-Gundy inequality is also valid on $E$.

\begin{theorem}\label{burkholder-ineq}
Let $\Phi:[0,\infty)\to \RR$ be a non decreasing, convex and continuous function with $\Phi(0)=0$ 
and satisfying the growth condition (for definition we refer to \ref{appendix-convex-function}). 
\del{such that
$\Phi$ satisfies the following growth condition: there exists a constants $C>0$ with
$$\Phi( 2\lambda)\le c \Phi(\lambda),\quad \lambda\in [0,\infty).
$$}
\\
Let $p\in [1,2]$ be fixed and let $E$ be a Banach space $E$ of  martingale type $p$. Then, 
there exists a constant $C_{p}(E,\phi)>0$ such that for all
$E$-valued  finite martingale $\{M_{n}\}_{n=0}^N$  the
following inequality holds
\DEQSZ 
\nonumber
 \mathbb{E}\Phi\lk( \sup_{0\le n\le N } | M_{n} |_E \rk)  \le C_{p}(E,\phi)
\, \mathbb{E}\Phi\lk( \lk(\sum_{n=0}^N | M_{n}-M_{n-1} |_E ^p \rk)^\frac 1p \rk),
\label{eqn-2.2-1}
\EEQSZ 
 where as  usually, we put  $M_{-1}=0$.
\end{theorem}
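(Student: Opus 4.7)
The proof splits naturally into a Doob-type reduction followed by a good-$\lambda$ inequality. Write $S_N := \lk(\sum_{n=0}^N |M_n - M_{n-1}|_E^p\rk)^{1/p}$ for the $p$-variation. Since the norm $|\cdot|_E$ is convex, $(|M_n|_E)_{n=0}^N$ is a non-negative real-valued submartingale; after reindexing via $X_n := |M_{n-1}|_E$ (so that $X_0 = 0$ by the convention $M_{-1} = 0$), Proposition \ref{doob} gives
\[
\EE\,\Phi\lk(\sup_{0\le n\le N}|M_n|_E\rk) \le 4(C_\Psi^\ast-1)\,\EE\,\Phi(|M_N|_E).
\]
The task thus reduces to bounding $\EE\,\Phi(|M_N|_E)$ by a constant multiple of $\EE\,\Phi(S_N)$.

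For this I would establish a classical good-$\lambda$ inequality. Fix $\beta>1$ and a small $\delta>0$, and introduce the stopping times $\sigma:=\inf\{n\ge 0:|M_n|_E>\lambda\}$ and $\tau:=\inf\{n\ge 0:S_n>\delta\lambda\}$. On the event $\{|M_N|_E>\beta\lambda,\ S_N\le\delta\lambda\}$ one has $\sigma\le N<\tau$, and the shifted martingale $(M_{(\sigma+k)\wedge N}-M_{\sigma\wedge N})_{k\ge 0}$ with respect to $(\CF_{(\sigma+k)\wedge N})_{k\ge 0}$ has sum of $p$-th absolute increments bounded by $(\delta\lambda)^p$. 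Applying Definition \ref{def-mart} to this shifted martingale, followed by Chebyshev's inequality, yields
\[
\PP\lk(|M_N|_E>\beta\lambda,\ S_N\le\delta\lambda\rk)\le L_p(E)\lk(\frac{\delta}{\beta-1}\rk)^{p}\,\PP\lk(\sup_n|M_n|_E>\lambda\rk).
\]

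To finish, split $\PP(|M_N|_E>\beta\lambda)\le\PP(|M_N|_E>\beta\lambda,\ S_N\le\delta\lambda)+\PP(S_N>\delta\lambda)$ and integrate against $d\Phi$ via the layer-cake formula $\EE\,\Phi(X)=\int_0^\infty \PP(X>\lambda)\,d\Phi(\lambda)$. The growth condition on $\Phi$ converts the scalings $1/\beta$ and $1/\delta$ into finite multiplicative constants, giving an estimate of the form $\EE\,\Phi(|M_N|_E)\le\kappa\,L_p(E)(\delta/(\beta-1))^p\,\EE\,\Phi(\sup_n|M_n|_E)+\kappa'\,\EE\,\Phi(S_N)$ with $\kappa,\kappa'$ depending only on the doubling constant of $\Phi$. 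Choosing $\delta$ small enough that $\kappa\,L_p(E)(\delta/(\beta-1))^p\cdot 4(C_\Psi^\ast-1)<1$ allows, after composing with the inequality from Step 1, the $\sup_n|M_n|_E$ term to be absorbed on the left, producing the claimed inequality with a constant depending only on $p$, $E$, and $\Phi$. The main obstacle is the good-$\lambda$ step: one must verify that the martingale-type-$p$ property transfers to the shifted filtration $(\CF_{(\sigma+k)\wedge N})_k$, and control the isolated jump $m_\sigma$, which prevents the naive estimate $|M_\sigma|_E\le\lambda$. This is typically handled by a splitting that separates $M_N-M_\sigma$ from the jump contribution at $\sigma$ and estimates each piece separately, the jump piece being absorbed into the $S_N$ term directly.
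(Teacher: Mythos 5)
Your Step 1 (the reduction of $\sup_n|M_n|_E$ to $|M_N|_E$ via Proposition \ref{doob}) is fine, but the good-$\lambda$ step contains a genuine gap, and it is not the one you flag at the end. To apply the martingale type $p$ inequality of Definition \ref{def-mart} to the transformed martingale $H_k=\sum_{j\le k}1_{\{\sigma<j\le \tau\}}(M_j-M_{j-1})$, where $\tau$ is the first time $S_n$ exceeds $\delta\lambda$ --- and you must stop at $\tau$, since the bound $S_N\le\delta\lambda$ is only available on the good event while $\EE|H_N|^p\le L_p(E)\,\EE\sum_j|h_j|^p$ integrates over the whole space --- you need a pathwise bound on $\sum_j|h_j|^p\le S^p_{\tau\wedge N}$. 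On $\{\tau\le N\}$ this equals $S^p_{\tau-1}+|M_\tau-M_{\tau-1}|^p$, and while the first term is $\le(\delta\lambda)^p$ by definition of $\tau$, the increment $|M_\tau-M_{\tau-1}|$ --- the jump that pushes $S$ over the threshold --- is completely uncontrolled, because $\{S_j>\delta\lambda\}$ is $\CF_j$- but not $\CF_{j-1}$-measurable, so no predictable truncation can exclude it. (The jump at $\sigma$, which you identify as the obstacle, is actually harmless: on the good event $|M_\sigma-M_{\sigma-1}|\le S_N\le\delta\lambda$.) Your proposed fix of ``absorbing the jump piece into the $S_N$ term'' does not close the estimate: the quantity to be absorbed sits inside an $L^p$-expectation over the bad part of the space, not inside $\EE\Phi(S_N)$.

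This is exactly the difficulty that the Davis decomposition is designed to circumvent, and it is how the paper proceeds: writing $m_n=M_n-M_{n-1}$ and $m_n^\ast=\sup_{k\le n}|m_k|$, one splits $M=G+H$ with $y_k=m_k1_{\{|m_k|\le 2m^\ast_{k-1}\}}$, $z_k=m_k-y_k$, $G_k=\sum_{j\le k}(y_j-\EE[y_j\mid\CF_{j-1}])$ and $H_k=\sum_{j\le k}(z_j+\EE[y_j\mid\CF_{j-1}])$. The differences of $G$ are dominated by the previsible process $4m^\ast_{k-1}$, so the good-$\lambda$ argument (Proposition \ref{inqqq}, i.e.\ Burkholder's Theorem 15.1, which requires precisely such a previsible majorant $w$) applies to $G$; the large-jump part $H$ is handled by the elementary $\ell^1$ bound $\sum_k|z_k|\le 2m^\ast_N$ together with Lemma \ref{lemma-convex} (to remove the conditional-expectation compensators) and the generalized Doob inequality. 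Without this decomposition, or some equivalent device for taming the first uncontrolled jump, the good-$\lambda$ inequality between $M^\ast_N$ and $S_N$ alone cannot be established by the stopping-time argument you describe, so the proof as proposed does not go through.
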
 
We will need the following Lemmata. Since the  Lemmata are valid for real valued 
random variables,
we omit their proofs and give only the reference.

\begin{lemma}\label{lemma-convex}(Burkholder, Davis and Gundy \cite[Theorem 3.2]{bdg}, Garsia \cite[Theorem 0.1]{garsia}) 
Let $\Phi$ be a convex function satisfying the conditions of Theorem \ref{burkholder-ineq}
and $(\Omega,\CF,\{\CF_n\}_{n=0}^N,\PP)$ be a filtered probability space.
Then there exists a constant $C$, only depending on $\Phi$, such that 
for all  sequence $\{ z_n\}_{n=0}^N$ of real-valued, non negative and measurable functions  $(\Omega,\CF,\PP)$
 the
following inequality holds
 $$
\EE \, \Phi\lk( \sum_{n=0} ^ N \EE[z_n\mid \CF_{n-1}]\rk)\le C\, \EE\Phi\lk( \sum_{n=0} ^ N  z_n\rk).
$$
To be more precise, $C=(c_\Phi ^ \ast) ^ {2c_\Phi ^ \ast}$, where $c_\Phi ^ \ast$ is defined in \eqref{c-phi}.
\end{lemma}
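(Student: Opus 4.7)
The plan is to mimic Garsia's proof of \cite[Theorem 0.1]{garsia}. Set $A_n := \sum_{k=0}^n \EE[z_k\mid \CF_{k-1}]$ and $B_n := \sum_{k=0}^n z_k$, so that $A=(A_n)$ is predictable and increasing, $B=(B_n)$ is increasing, and $M := B - A$ is a mean-zero $(\CF_n)$-martingale. The target is $\EE\Phi(A_N)\le C\,\EE\Phi(B_N)$.

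The first step is an integration-by-parts bound on $\EE\Phi(A_N)$. Writing $\phi := \Phi'$, which is nondecreasing by convexity of $\Phi$, and using $\Phi(t) = \int_0^t \phi(s)\,ds$ together with the fact that $A$ is increasing gives the pointwise estimate
\[
\Phi(A_N) \;=\; \sum_{n=0}^N \int_{A_{n-1}}^{A_n} \phi(s)\,ds \;\le\; \sum_{n=0}^N \phi(A_n)(A_n - A_{n-1}).
\]
Since $A$ is predictable, $\phi(A_n)$ is $\CF_{n-1}$-measurable, and therefore
\[
\EE[\phi(A_n)(A_n - A_{n-1})] \;=\; \EE\bigl[\phi(A_n)\,\EE[z_n\mid \CF_{n-1}]\bigr] \;=\; \EE[\phi(A_n)\,z_n] \;=\; \EE[\phi(A_n)(B_n - B_{n-1})].
\]
Summing over $n$ and using once more that $\phi$ is nondecreasing while $B$ is increasing yields $\EE\Phi(A_N)\le \EE[\phi(A_N)\, B_N]$.

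To convert this into the desired inequality I would apply the Young inequality $ab\le \Phi(a)+\Psi(b)$, with $\Psi$ the convex conjugate of $\Phi$, to the rescaled pair $(B_N/\lambda,\lambda\phi(A_N))$. The growth condition on $\Phi$ furnishes the pointwise bound $\Psi(\phi(t))\le (c_\Phi^\ast - 1)\,\Phi(t)$ via the chain $\Psi(\phi(t)) = t\phi(t)-\Phi(t)\le \Phi(2t)-\Phi(t)\le (c_\Phi^\ast-1)\Phi(t)$, where in the second step I used that monotonicity of $\phi$ gives $t\phi(t)\le \int_t^{2t}\phi(s)\,ds=\Phi(2t)-\Phi(t)$. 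Tracking the $\lambda$-dependence of $\Psi(\lambda\phi(t))$ through the same $\Delta_2$ device, one chooses $\lambda$ so that the $\Phi(A_N)$ contribution on the right has coefficient strictly less than one and can be absorbed on the left, ending with $\EE\Phi(A_N)\le C\,\EE\Phi(B_N)$.

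The main obstacle is the quantitative bookkeeping: getting the sharp explicit constant $C=(c_\Phi^\ast)^{2c_\Phi^\ast}$ rather than some qualitative $C(\Phi)<\infty$ requires iterating (or optimising) the Young/$\Delta_2$ estimate carefully, since a single-shot application of Young gives a constant of the wrong shape. This is precisely the computation carried out in \cite[Theorem 0.1]{garsia}, and is what the authors appeal to when they omit the proof in the main text.
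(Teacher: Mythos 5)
Your argument is correct in substance and reaches the conclusion by a genuinely different middle section than the paper's proof, even though both descend from Garsia's Theorem 0.1. The paper starts from the identity \eqref{fig:01}, $u\phi(u)=\Phi(u)+\Psi(\phi(u))$, evaluated at $u=Z_n^{\CF}:=\sum_k\EE[z_k\mid\CF_{k-1}]$, introduces the auxiliary martingale $Y_k=\EE[\phi(Z_n^{\CF})\mid\CF_k]$, uses the tower property to get $\EE\, Z_n^{\CF}\phi(Z_n^{\CF})\le\EE\, Z_n Y_n^\ast$, and then needs Young's inequality \emph{together with} the generalized Doob maximal inequality (Proposition \ref{doob}) to control $\EE\Psi(Y_n^\ast)$ before subtracting $\EE\Psi(\phi(Z_n^{\CF}))$ from both sides. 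You instead exploit predictability and monotonicity of $A_n=\sum_{k\le n}\EE[z_k\mid\CF_{k-1}]$ directly: Abel summation gives $\Phi(A_N)\le\sum_n\phi(A_n)(A_n-A_{n-1})$ pointwise, predictability converts the increments back into $z_n$, and monotonicity of $\phi\circ A$ yields $\EE\Phi(A_N)\le\EE[\phi(A_N)B_N]$ with no maximal function and no Doob inequality. This is the cleaner, now-standard ``Garsia--Neveu'' packaging. What it costs is the advertised constant: your Young/absorption scheme produces something of the shape $2\,(2(c_\Phi^\ast-1))^{c_\Phi^\ast}$ rather than the $(c_\Phi^\ast)^{2c_\Phi^\ast}$ asserted in the statement; both depend only on $c_\Phi^\ast$, which is all that is used downstream, but you should either carry out the optimisation you allude to or weaken the ``to be more precise'' clause. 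A small simplification: the definition \eqref{c-phi} gives $\Psi(\phi(t))=t\phi(t)-\Phi(t)\le(c_\Phi^\ast-1)\Phi(t)$ immediately, so the detour through $\Phi(2t)$ is unnecessary.

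The one genuinely missing step is the legitimacy of the absorption: subtracting a multiple of $\EE\Phi(A_N)$ from both sides requires $\EE\Phi(A_N)<\infty$ a priori, which is not part of the hypotheses. The standard repair is to run the argument for the stopped sums with $\tau_m:=\inf\{n:A_{n+1}>m\}$; since $A$ is predictable, $\tau_m$ is a stopping time, $\{k\le\tau_m\}\in\CF_{k-1}$ so the predictable compensator of $(z_k 1_{\{k\le\tau_m\}})_k$ is $A_{\cdot\wedge\tau_m}$, and $A_{N\wedge\tau_m}\le m$ makes $\EE\Phi(A_{N\wedge\tau_m})$ finite; one then lets $m\to\infty$ by monotone convergence. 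The paper's own proof has exactly the same unstated issue when it subtracts $\EE\Psi(\phi(Z_n^{\CF}))$ from both sides, so this is not a defect of your route specifically, but it is the step you should add.
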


\begin{proof}[Proof of Lemma \ref{lemma-convex}]
We are following the proof of Garsia \cite[Theorem 0.1]{garsia}.
Put for $\phi$ given by ... 
\DEQS 
Z_n := \sum_{k=0}^n z_k, &&\quad Z^\CF_n := \sum_{k=0}^n \EE[ z_k\mid \CF_k]
\\ 
Y_0=0, && \quad Y_k:=\EE[ \phi( Z_n^\CF)\mid \CF_k],\, 0\le k\le n.
\EEQS
This given we obtain by the tower property
\DEQS
\EE Z^\CF_n \phi( Z^\CF_n )= \EE  \sum_{k=0} ^n \EE[ z_k\mid \CF_k]\cdot \phi( Z_n^\CF)
=\EE   \sum_{k=0} ^n \EE[  \,\EE[ z_k\mid \CF_k]\cdot \phi( Z_n^\CF)\mid \CF_k]
\\=\EE   \sum_{k=0} ^n \EE[ z_k\mid \CF_k] \cdot \EE[ \phi( Z_n^\CF)\mid \CF_k]=
\EE   \sum_{k=0} ^n  z_k \,  \EE[ \phi( Z_n^\CF)\mid \CF_k]\le \EE   \sum_{k=0} ^n  z_k \,  Y_n^\ast
\le  Z_n Y_n^\ast.
\EEQS
From the Young inequality we get for $a=c_\phi^\ast $ (for definition of $c_\phi^\ast$ see \eqref{c-phi})
\DEQS
\EE Z^\CF_n \phi( Z^\CF_n )\le \EE  \Phi(a^2 Z_n) +\EE \Psi( a^{-2} Y_n^\ast)
\EEQS
From properties of $\Phi$, i.e. \eqref{fig:01} we get
\DEQS
\EE \Phi(  Z^\CF_n)+\EE\Psi(  \phi( Z^\CF_n ))\le \EE  \Phi(a^2 Z_n) + a^{-2}\, \EE \Psi( Y^\ast_n)
\le\EE  \Phi(a^2 Z_n) + a^{-1}\, \EE \Psi( a Y_n).
\EEQS
From   and the definition of $Y$, we get
\DEQS
\EE \Phi(  Z^\CF_n)+\EE\Psi(  \phi( Z^\CF_n ))\le (c_\Phi^\ast)^{2c_\phi^\ast }\EE  \Phi( Z_n) +  \EE \Psi( Y_n)
\le (c_\Phi^\ast)^{2c_\phi^\ast }\EE  \Phi( Z_n) +  \EE \Psi( \phi( Z_n^\CF)).
\EEQS
Subtraction on both sides $\EE \Psi( \phi( Z_n^\CF) )$ leads the assertion.

\end{proof}
\begin{lemma}\cite[Lemma 7.1]{burkholder}\label{ausburkholder}
Suppose that $x$ and $y$ are nonnegative  $\RR$-valued random variables on a probability space $(\Omega,\CF,\PP)$
and $\beta>1$, $\delta>1$, $\ep>1$ are real numbers such that
$$
\PP\lk( y> \beta \lambda,\, x\le \delta \lambda\rk) \le \ep \,\PP \lk( y>\lambda\rk),\quad \lambda>0.
$$
In addition, let $\gamma$ and $\eta$ be real numbers satisfying
$$
\Phi( \beta\lambda)\le \gamma \Phi(\lambda),\quad \Phi(\lambda/\delta)\le \eta \Phi(\lambda),\quad \lambda\ge 0.
$$
If $\gamma\ep<1$ then 
$$
\EE \Phi(y)\le {\gamma\eta\over 1-\gamma\ep} \EE \Phi(x).
$$
\end{lemma}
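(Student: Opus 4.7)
The plan is to run the classical Burkholder ``good $\lambda$'' argument, integrating the pointwise distributional inequality against the measure $d\Phi$. First I would rewrite the hypothesis in the equivalent form
\[
\PP(y>\beta\lambda)\le \PP(y>\beta\lambda,\,x\le\delta\lambda)+\PP(x>\delta\lambda)\le \ep\,\PP(y>\lambda)+\PP(x>\delta\lambda),\quad \lambda>0,
\]
using the obvious inclusion $\{y>\beta\lambda\}\subseteq\{y>\beta\lambda,\,x\le\delta\lambda\}\cup\{x>\delta\lambda\}$. This is the only place where the joint hypothesis on $(x,y)$ enters; from here on everything is scalar bookkeeping.

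Next I would integrate both sides against $d\Phi(\lambda)$ on $[0,\infty)$, exploiting the layer-cake identity
\[
\EE\,\Phi(Z)=\int_0^\infty \PP(Z>\lambda)\,d\Phi(\lambda),\qquad Z\ge 0,
\]
which is valid since $\Phi$ is non-decreasing, continuous with $\Phi(0)=0$. The change of variables $\lambda\mapsto\beta\lambda$ (respectively $\lambda\mapsto\delta\lambda$) then identifies $\int_0^\infty\PP(y>\beta\lambda)\,d\Phi(\lambda)=\EE\,\Phi(y/\beta)$ and $\int_0^\infty\PP(x>\delta\lambda)\,d\Phi(\lambda)=\EE\,\Phi(x/\delta)$, so the result of the integration is
\[
\EE\,\Phi(y/\beta)\le \ep\,\EE\,\Phi(y)+\EE\,\Phi(x/\delta).
\]
The two multiplicative hypotheses on $\Phi$ now translate pointwise into $\Phi(y)\le\gamma\,\Phi(y/\beta)$ and $\Phi(x/\delta)\le\eta\,\Phi(x)$, so the display becomes
\[
\tfrac{1}{\gamma}\,\EE\,\Phi(y)\le \ep\,\EE\,\Phi(y)+\eta\,\EE\,\Phi(x),
\]
and upon rearranging, using $\gamma\ep<1$, I would obtain the claimed bound with constant $\gamma\eta/(1-\gamma\ep)$.

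The one technicality I expect to be the actual obstacle is the subtraction step: moving $\gamma\ep\,\EE\,\Phi(y)$ to the left is only legitimate if $\EE\,\Phi(y)<\infty$. To handle this I would first apply the whole argument to the truncated variable $y_N:=y\wedge N$ in place of $y$ (the hypothesis of the lemma is inherited since $\{y_N>\beta\lambda\}\subseteq\{y>\beta\lambda\}$, so the good-$\lambda$ inequality is preserved), for which $\EE\,\Phi(y_N)<\infty$ is trivial because $\Phi(y_N)\le\Phi(N)$. This yields the bound $\EE\,\Phi(y_N)\le(\gamma\eta/(1-\gamma\ep))\,\EE\,\Phi(x)$ with a constant independent of $N$, and letting $N\to\infty$ via monotone convergence delivers the lemma in full generality.
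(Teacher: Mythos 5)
Your argument is correct and is essentially the standard proof of Burkholder's Lemma 7.1, which the paper itself does not reproduce (it explicitly omits the proof and refers the reader to \cite{burkholder}); the good-$\lambda$ decomposition, the integration against $d\Phi$, the changes of variables identifying $\EE\Phi(y/\beta)$ and $\EE\Phi(x/\delta)$, and the use of the two growth hypotheses together with $\gamma\ep<1$ are exactly the cited computation. The one point to tighten is the truncation step: replacing $y$ by $y_N=y\wedge N$ changes \emph{both} sides of the hypothesis, so the inclusion $\{y_N>\beta\lambda\}\subseteq\{y>\beta\lambda\}$ alone does not finish the check --- you should add that for $\beta\lambda\ge N$ the left-hand side is zero, while for $\beta\lambda<N$ one has $\lambda<N$ and hence $\PP(y_N>\lambda)=\PP(y>\lambda)$, after which the good-$\lambda$ inequality for $y_N$ follows and the monotone convergence passage to the limit is as you describe.
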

\begin{proof}[Proof of Lemma \ref{ausburkholder}]
The proof  follows by some direct calculations, therefore, we omit the proof and refer the reader 
e.g. to \cite[Lemma 7.1]{burkholder}.
\end{proof}
By means of the generalised Doob's maximal inequality and the Lemmata before, following, for the Proof of Theorem
\ref{burkholder-ineq} necessary,  Proposition can be verified. 
\begin{proposition}\label{inqqq}
There exists a constant $C<\infty$ such that
for all $E$-valued martingales  $\{M_n\}_{n=0} ^N $ and  all $M$-previsible processes $\{w_m\}_{m=0}^N $  
satisfying 
$|M_n-M_{n-1}|\le w_n$
for all $1\le n\le N $,
 we have
\DEQS
\EE\Phi(M ^ \ast _n) \le C\EE \phi(S_{n,p}(M) )+ C \EE \phi (w_n ^ \ast),\quad 1\le n\le N.
\del{\\
\EE\Phi( S_{n,q}(M) ) \le C\EE \phi(  M_n ^ \ast )+ C \EE \phi (w_n ^ \ast),\quad 1\le n\le N.}
\EEQS
An estimate of the constant is given by $C_{\Phi,p} (E)= \min\{ 2 \delta ^ {-c_\Phi ^ \ast} \beta  ^ {c_\Phi ^ \ast}:\, 
\beta>1,0<\delta<1-\beta$ such that $2L_p(E){\delta ^ p\beta  ^ {c_\Phi ^ \ast} \over (\beta-\delta-1) ^ p}=\frac 12\}$.
\end{proposition}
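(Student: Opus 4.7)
The plan is to prove Proposition~\ref{inqqq} by a classical good-$\lambda$ argument in the style of Burkholder, invoking Lemma~\ref{ausburkholder} at the end. The crucial distributional estimate to establish is
\begin{equation*}
\PP\bigl(M_n^\ast > \beta\lambda,\; S_{n,p}(M)\le \delta\lambda,\;  w_n^\ast \le \delta\lambda\bigr) \le \epsilon\, \PP(M_n^\ast>\lambda),\qquad \lambda>0,
\end{equation*}
for $\beta>1+\delta$ and $\delta>0$, with $\epsilon=L_p(E)\delta^p/(\beta-1-\delta)^p$. Once this is in hand, Lemma~\ref{ausburkholder} applied to $y=M_n^\ast$ and $x=S_{n,p}(M)\vee w_n^\ast$, with $\gamma=\beta^{c_\Phi^\ast}$ and $\eta=\delta^{-c_\Phi^\ast}$ inherited from the growth condition on $\Phi$, yields $\EE\Phi(M_n^\ast)\le C_{\Phi,p}(E)\,\EE\Phi(S_{n,p}(M)\vee w_n^\ast)$, and the pseudo-subadditivity $\Phi(a+b)\le c\,\Phi(a)+c\,\Phi(b)$ coming from the growth condition finishes the job.

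To obtain the good-$\lambda$ bound, I fix $\lambda>0$ and introduce the two stopping times
\begin{equation*}
\tau := \inf\{k\ge 0 : |M_k|>\lambda\},\qquad \sigma := \inf\{k\ge 0 : S_{k,p}(M)>\delta\lambda \text{ or } w_{k+1}>\delta\lambda\},
\end{equation*}
where $\sigma$ is well-defined because $w$ is previsible. On the event $A := \{M_n^\ast>\beta\lambda,\; S_{n,p}(M)\le\delta\lambda,\; w_n^\ast\le\delta\lambda\}$ one has $\tau\le n<\sigma$, and the bound $|M_\tau-M_{\tau-1}|\le w_\tau\le\delta\lambda$ yields $|M_\tau|\le(1+\delta)\lambda$. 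Consequently $\sup_{\tau<k\le n}|M_k - M_\tau| > (\beta-1-\delta)\lambda$ on $A$, and replacing $n$ by $\sigma\wedge n$ in this supremum does not change its value on $A$.

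The heart of the argument is to apply the martingale type $p$ property to the stopped shifted martingale $N_k := M_{(\tau+k)\wedge\sigma\wedge n} - M_{\tau\wedge n}$, which is a martingale with respect to $(\CF_{(\tau+k)\wedge\sigma\wedge n})_k$ and whose increments vanish after $\sigma\wedge n$. Applying Definition~\ref{def-mart} under $\PP(\cdot\mid\CF_\tau)$ together with Doob's maximal inequality \eqref{burkholder-simple} gives, almost surely,
\begin{equation*}
\EE\Big[\sup_{k}|N_k|_E^p\,\Bigm|\,\CF_\tau\Big] \le C_p L_p(E)\,\EE\Big[\sum_{k=\tau+1}^{\sigma\wedge n}|M_k-M_{k-1}|_E^p\,\Bigm|\,\CF_\tau\Big] \le C_p L_p(E)(\delta\lambda)^p,
\end{equation*}
because the inner sum is pointwise at most $S_{\sigma\wedge n,p}(M)^p\le(\delta\lambda)^p$ by definition of $\sigma$. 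Conditional Chebyshev on $\CF_\tau$ and integration in $\PP$ now yield the desired good-$\lambda$ estimate. Choosing $\beta,\delta$ so that $2L_p(E)\delta^p\beta^{c_\Phi^\ast}/(\beta-1-\delta)^p=\tfrac12$ keeps $\gamma\epsilon=\tfrac12<1$ and produces the constant displayed in the statement.

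The main obstacle is the interplay between the stopping time $\tau$ (which witnesses $M^\ast_n>\lambda$) and the $p$-variation control, which is only a global bound on the event $A$ and not an almost-sure bound: the $p$-variation hypothesis must first be converted into an a.s.\ bound on a pre-stopped increment sum via the auxiliary stopping time $\sigma$. Verifying that the shifted and stopped process $N$ is genuinely a martingale with respect to the shifted filtration, and that the conditional version of the type $p$ inequality is legitimate (via regular conditional probabilities), is routine but is where the technical care of the argument is concentrated.
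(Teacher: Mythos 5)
Your proposal is correct and follows essentially the same route as the paper: a good-$\lambda$ distributional inequality for the pair $(M_n^\ast,\, S_{n,p}(M)\vee w_n^\ast)$, obtained by applying the martingale type $p$ property and Doob's inequality to the martingale started at the first time $|M_k|>\lambda$ and stopped when the $p$-variation sum or $w$ exceeds $\delta\lambda$, followed by an appeal to Lemma \ref{ausburkholder} and the growth condition on $\Phi$. The only notable implementation difference is that the paper realizes this auxiliary martingale via a predictable indicator, $H_n=\sum_{k\le n}1_{\{\mu<k\le\nu\vee\sigma\}}(M_k-M_{k-1})$, and then uses the \emph{unconditional} type $p$ inequality together with the weak-type Doob bound \eqref{doob-simple} (the localizing factor $\PP(M_N^\ast>\lambda)$ coming for free since $S_{N,p}(H)=0$ on $\{\mu=\infty\}$), which sidesteps both the conditional-type-$p$/regular-conditional-probability issue you flag as the delicate point and the extra constant from the strong $L^p$ Doob inequality.
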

\begin{proof}[Proof of Proposition  \ref{inqqq}]
The proof follows the proof of \cite[Theorem 15.1]{burkholder}, where 
only the real valued case is considered. Therefore, we had to modify the original proof of Burkholder  
at some points.
Without loss of generality we set $n=N$.
Similarly, we will show that the random variables $M_N ^ \ast$ and $S_{N,p}(M)\vee w_N  ^ \ast$ satisfies the assumption of 
Lemma \ref{ausburkholder}.
I.e. we will show that for $\beta>1$ and $0<\delta<\beta-1$ the following holds
\DEQSZ\label{kkk}
\\
\nonumber
\PP\lk( M ^ \ast_N  > \beta \lambda, \, S_{N,p}(M)\vee w ^ \ast_N  \le \delta\lambda\rk) \le
 2\, L_p(X)\,{  \delta  ^ p\over (\beta -\delta-1) ^ p } 
\,\PP\lk( M _N ^ \ast >\lambda\rk),\quad \lambda>0.
\EEQSZ
To prove \eqref{kkk}, we introduce the following stopping times. Let
$
\mu := \inf\{1\le n\le N : |M_n|>\lambda\}$, $\nu := \inf\{1\le n\le N: |M_n|>\beta\lambda\}$, and  
$\sigma := \inf\{1\le n\le N: |S_{n,p}(M)|>\delta\lambda\mbox{ or } w_{n}>\delta\lambda\}$.
If the infimum will not be attained, we set the stopping time to $\infty$.
Let $H=\{ H_n,\, 1\le n\le N\}$ be defined by
$$ H_n := \sum_{k\le n} h_k=\sum_{k\le n} 1_{\{ \mu<k\le \nu\vee \sigma\}} \, (M_{k}-M_{k-1}),\quad 0\le n\le N.
$$
Since $w$ is previsible, 
$\{ \mu<k\le \nu\vee \sigma\}\in \CF ^ M_k$, and the process $H$ is a martingale. Moreover, 
on $\{\mu=\infty\}=\{ M _N ^ \ast\le \lambda\}$, $S_{N,p}(H)=0$.
On $\{0<\sigma<\infty\}$, the assumption on $w$ leads to
\DEQSZ\nonumber
S ^ p_{N,p}(H) \le S_{\sigma,p} ^ p (H)\le S_{\sigma,p} ^ p (M)= S_{\sigma-1,p} ^ p(M) + (M_\sigma-M_{\sigma-1}) ^ p 
\\\label{here-what}
\le  S_{\sigma-1,p} ^ p(M) +w_\sigma ^ p \le 2\delta ^ p\lambda ^ p.
\EEQSZ
This inequality can also be extended to the case where $\sigma=\infty$. Therefore,
since $E$ is a Banach space of martingale type $p$, by Definition \eqref{def-mart}, there exists a constant $L_p(E)$ such that 
\DEQSZ\label{here-what-2}
\EE |H_N | ^ p \le L_p(E)\, \EE \sum_{k=0}^N |h_k| ^ p \le  L_p(E)\, \EE S_{N,p} ^ p(H) .
\EEQSZ
Substituting \eqref{here-what}, we get
$$
\EE |H_N | ^ p \le 2\, L_p(E)\, \delta  ^ p\lambda ^ p \PP\lk( M ^ \ast_N  >\lambda\rk).
$$
Since $\{ M_N  ^ \ast >\beta \lambda,\, S_{N,p}(M)\vee w_N  ^ \ast \le \delta\lambda\}
\subset \{ H_N ^ \ast >\beta \lambda-\lambda -\delta\lambda\}$, 
\DEQS
\PP\lk( M _N ^ \ast >\beta\lambda,\, S_{N,p}(M) \vee w ^ \ast _N \le \delta \lambda\rk)
&\le& \PP\lk( H_N ^ \ast >\beta \lambda-\lambda -\delta\lambda\rk).
\EEQS
The simple version of Doob's maximal inequality, 
i.e.\ \eqref{doob-simple}, and  \eqref{here-what-2} 
give
\DEQS
\PP\lk( M _N ^ \ast >\beta\lambda,\, S_{N,p}(M) \vee w_N  ^ \ast \le \delta \lambda\rk)
&\le& \PP\lk( H_N ^ \ast >\beta \lambda-\lambda -\delta\lambda\rk)
\\
\le  {1\over (\beta -\delta-1) ^ p \lambda ^ p}  \EE|H_N | ^ p
& \le & 2\, L_p(E)\,{  \delta  ^ p\over (\beta -\delta-1) ^ p } \PP\lk( M_N  ^ \ast >\lambda\rk).
\EEQS
Since $\delta$ can be chosen arbitrary small, the assumptions of Lemma \ref{ausburkholder} are satisfied and
we apply the Lemma to verify the assertion.  The exact constant can be verified by using inequality \eqref{prop-phi-2} and
Definition \eqref{c-phi}.
\end{proof}

\begin{proof}[Proof  of Theorem \ref{burkholder-ineq}]
The proof works in analogy to the proof of Davis, Burkholder and Gundy 
(see e.g.\ \cite[Theorem 1.1]{bdg}, or \cite[Theorem 15.1]{burkholder}).
For an $E$-valued martingale  $\{M_n\}_{n=0}^N  $ and its sequence of martingale differences
$\{m_n\}_{n=0}^N $, let $\CA_n:= \{|m_n|\le 2 m_{n-1} ^ \ast\}$, $0\le n\le N$. 
Davis introduced in \cite{davis} the following decomposition of
$M$, where $
M=G+H,
$
and $G=\{G_n\}_{n=0}^N$ and  $H=\{H_n\}_{n=0}^N   $ are defined by  
\DEQS
G_n &=& \sum_{k=1} ^ n g_k := \sum_{k=1} ^ n\lk[ y_k-\EE[y_k\mid \CF_{k-1}]\rk] ,\quad 0\le n\le N,
\\
H_n  &=& \sum_{k=1} ^ n h_k := \sum_{k=1} ^ n \lk[ z_k+\EE[y_k\mid \CF_{k-1}]\rk] ,\quad  0\le n\le N, 
\EEQS
with $y_k = m_k 1_{\CA_k}$ and $z_k = m_k 1_{\CA ^ C_k}$, $ 0\le k\le N$.
\\
\noindent
Now, since
$$
M ^ \ast_n  \le G _n ^ \ast + H ^ \ast_n , 
$$
by \eqref{prop-phi}, there exists a constant $c_\Phi<\infty$, depending only on $\Phi$, such that
\DEQSZ\label{inqqq-3}
\EE\Phi( M _n ^ \ast ) \le c_\Phi \, \EE \Phi\big( G_n  ^ \ast \big) + c_\Phi\, \EE \Phi \big(   H_n  ^ \ast \big).
\EEQSZ
\noindent

First, we will investigate the last term, i.e.\ $\EE \Phi(H ^ \ast)$ and then we will investigate $\EE \Phi(G ^ \ast)$.
Observe that, since $\{m_n\}_{n=0} ^N$ is a sequence of martingale differences,
$$
\EE\lk[y_k\mid \CF_{k-1}]\rk] +\EE\lk[z_k\mid \CF_{k-1}]\rk]=\EE\lk[m_k\mid \CF_{k-1}]\rk]=0,\quad 1\le k\le N.
$$ 
Therefore, $H_n = \sum_{k=0} ^ n \lk[ z_k-\EE[z_k\mid \CF_{k-1}]\rk]$, $1\le n\le N $.
Applying  the Jensen inequality and  Lemma \ref{lemma-convex} we get
\DEQS
\EE\Phi\big( H_n ^ \ast \big)  &\le&  \EE\Phi\Big(  \sum_{k=0} ^ n \lk|  z_k-\EE[z_k\mid \CF_{k-1}]  \rk|  \Big) 
\\ &\le & 
\EE\Phi\Big(  \sum_{k=0} ^ n \lk|  z_k\rk| +  \sum_{k=0} ^ n \EE[ \lk| z_k\rk| \mid \CF_{k-1}]   \Big) \le
2\, \EE \Phi\Big( \sum_{k=0} ^ n |z_k|\Big).
\EEQS
Since
$|m_k|>2m_{k-1} ^ \ast$  implies $|m_k|<2(m ^ \ast_k-m ^ \ast_{k-1})$, 
and, hence, $|z_k|\le 2(m ^ \ast_k-m ^ \ast_{k-1})$.
Moreover, since $m_n ^ \ast = \sup_{k\le n} |m_k|=\sum_{k=1}  ^ n (m_k ^ \ast-m_{k-1} ^ \ast)$, it follows that
$\sum_{k=1} ^ n|z_k|\le 2 m _n^ \ast$. Therefore,
$$\EE\Phi( H ^ \ast _n ) \le 4\, \EE \Phi( m ^ \ast_n ),\quad 1\le n\le N.
$$
Since $\{|m_n |\}_{n=1} ^ N$ is a non negative real valued sub-martingale, we get by a generalization of
Doob's maximal inequality, i.e.\ Proposition \ref{doob}
\DEQS
\EE\Phi( H^\ast_n ) \le C\, 2\, \EE \Phi(| m_n|  ) .
\EEQS
From $m_n = M_n-M_{n-1}$, it follows that $|m_n|\le S_{n,p}(M)$, and, hence,
\DEQSZ\label{letzte}
\EE\Phi( H^\ast_n ) \le C'\, 2\, \EE \Phi( S_{n,p}( M)  ),\quad 1\le n\le N.
\EEQSZ 
\\[0.3cm]
\noindent

In the next paragraph, we will give an upper estimate of the term $\EE \Phi( G _n ^ \ast )$.
Observe, first, that  $ |m_k|\le  2m_{k-1} ^ \ast$ 
implies $|g_k| \le 4m_{k-1} ^ \ast$, $ 0\le k\le N$.
This means, that $g_k$ is controlled by a $\CF_{k-1}$-measurable random variable,
and, therefore, we can apply   Proposition \ref{inqqq} to get a  control  of $G ^ \ast$.
In particular, there exists a constant $c_{\Phi,E}<\infty$, only depending on $\Phi$ and $E$, such that
\DEQSZ\label{inqqq-2}
\EE \Phi( G ^ \ast _n ) \le c_{\Phi,E}\EE \Phi( S_{n,p}(G)    )  +  c_{\Phi,E}  \EE \Phi(  m _n ^ \ast).
\EEQSZ
The term $ \EE \Phi(  m_n  ^ \ast)$ can be estimated by the generalized Doob maximal inequality \ref{doob}.
So, we obtain 
\DEQSZ\label{inqqq-ll}
\EE \Phi( G ^ \ast_n  ) \le c'_{\Phi,E}\EE \Phi( S_{n,p}(G)     )  +  c'_{\Phi,E}  \EE \Phi(  m _n ).
\EEQSZ
From $m_n = M_n-M_{n-1}$, it follows that $|m_n|\le S_{n,p}(M)$. \del{we get by the tower property
\DEQSZ\label{absss}\nonumber
 \EE\Phi(| m_n|  ) &\le &c_\Phi\EE \Phi(|M_n|)+c_\Phi\EE \Phi(|M_{n-1}|)
\\ &\le& c_\Phi\EE \Phi(|M_n|)+c_\Phi\EE \Phi(|\EE[M_{n}\mid \CF_{n-1}]|)\nonumber
\\ &\le& c_\Phi\EE \Phi(|M_n|)+c_\Phi\EE \Phi( \EE[ |M_{n} |\mid \CF_{n-1} ] )\le 2\,  c_\Phi\EE \Phi(|M_n|).
\EEQSZ
}
It remains to investigate $\EE \Phi( S_{n,p}(G)     )$.
Note, that
\DEQS
\EE\Phi( S_{n,p}(G) )  & \le&  c_{E,\Phi}'' \,  \EE \Phi( S_{n,p}(M)  )   +  c_{E,\Phi}''  \,
\EE \Phi(S_{n,p}(H) ).
\EEQS
Now, since \DEQS 
 \sum_{k\le n} |z_k|^p
&\le&   \sum_{k\le n} |m^\ast_k-m^\ast_{k-1}|^p
\\ &\le&   |m^\ast_n|^{p-1}\sum_{k\le n} |m^\ast_k-m^\ast_{k-1}|\le   |m^\ast_n|^p,
\EEQS
we have by Lemma \ref{lemma-convex} applied to $\{|z_n|^p\}_{n=1}^N$
$$ \EE \Phi(S_{n,p}(H) )\le C \,\EE \Phi\big( m_n ^\ast\big),\quad 1\le n\le N.
$$
Again applying Proposition \ref{doob} leads to
\DEQSZ\label{llllll}
\EE\Phi( S_{n,p}(G) )  & \le&  c''_{E,\Phi} \,  \EE \Phi( S_{n,p}(M)  )+ c''' _{E,\Phi} \EE \Phi\big( |m_n |\big)
\le c''''_{E,\Phi} \,  \EE \Phi( S_{n,p}(M)  ).
\EEQSZ
\del{ \\ &\le&  c'_{E,\Phi}\,   \EE \Phi(  S_{n,p}(M)   )   +  c'_{E,\Phi} \,\EE\Phi\big(H_n ^ \ast \big) ,\quad 1\le n\le N ,
\EEQS}
Therefore, substituting \eqref{letzte}, \eqref{inqqq-ll} and \eqref{llllll} in \eqref{inqqq-3} 
 gives 
\DEQSZ\label{inqqq-4}
\EE\Phi( M ^ \ast _n) \le  \bar c_{E,\Phi}    \EE \Phi(S_{n,p}(M)   )   ,\quad 1\le n\le N.
\EEQSZ
\end{proof}

\del{\subsection{the left hand side}

\begin{theorem}\label{burkholder-ineq-lh}
Let $\Phi:[0,\infty)\to \RR$ be a non decreasing, convex and continuous function with $\Phi(0)=0$ 
and satisfying the growth condition. 
\del{such that
$\Phi$ satisfies the following growth condition: there exists a constants $C>0$ with
$$\Phi( 2\lambda)\le c \Phi(\lambda),\quad \lambda\in [0,\infty).
$$}
\\
Assume also that $2\le q\le \infty $ is fixed and $E$ is a Banach space $E$ of  martingale cotype $q$. Then, 
there exists a constant $C_{q}(E,\phi)>0$ such that for all
$E$-valued  finite martingale $\{M_{n}\}_{n=0}^N$  the
following inequality holds
\DEQSZ 
\nonumber
\mathbb{E}\Phi\lk( \lk(\sum_{n=0}^N | M_{n}-M_{n-1} |_E ^q \rk)^\frac 1p \rk)\le  C_{q}(E,\phi)
 \mathbb{E}\Phi\lk( \sup_{0\le n\le N } | M_{n} |_E \rk)  
\, ,
\label{eqn-2.2-1-lh}
\EEQSZ 
 where as  usually, we put  $M_{-1}=0$.
\end{theorem}

\begin{proposition}\label{inqqq-2-lh}
There exists a constant $C<\infty$ such that
for all $E$-valued martingales  $\{M_n\}_{n=0} ^N $ and  all $M$-previsible processes $\{w_m\}_{m=0}^N $  
satisfying 
$|M_n-M_{n-1}|\le w_n$
for all $1\le n\le N $,
 we have
\DEQS
\EE\Phi( S_{n,q}(M) ) \le C\EE \phi(  M_n ^ \ast )+ C \EE \phi (w_n ^ \ast),\quad 1\le n\le N.
\EEQS
An estimate of the constant is given by $C_{\Phi,p} (E)= \min\{ 2 \delta ^ {-c_\Phi ^ \ast} \beta  ^ {c_\Phi ^ \ast}:\, 
\beta>1,0<\delta<1-\beta$ such that $2L_p(E){\delta ^ p\beta  ^ {c_\Phi ^ \ast} \over (\beta-\delta-1) ^ p}=\frac 12\}$.
\end{proposition}

\begin{proof}[Proof of Proposition  \ref{inqqq-2-lh}]
The proof follows the line of the proof before, only the r\^ole of $S_{N,q}(M)$ and $M^\ast_N$ have been swapped.
Similarly, we will show that the random variables $S_{N,q}(M)$ and $M_N ^ \ast \vee w_N  ^ \ast$ satisfies the assumption of 
Lemma \ref{ausburkholder}.
I.e. we will show that for $\beta>1$ and $0<\delta<\beta-1$ the following holds
\DEQSZ\label{kkk-lh} 
\\
\nonumber
\PP\lk(   S_{N,q}(M)  > \beta \lambda, \,M ^ \ast_N  \vee w ^ \ast_N  \le \delta\lambda\rk) \le
 2\, L_p(X)\,{  \delta  ^ p\over (\beta -\delta-1) ^ p } 
\,\PP\lk( S_{N,q}(M)   >\lambda\rk),\quad \lambda>0.
\EEQSZ
To prove \eqref{kkk-lh}, we introduce again following stopping times, but in comparison to before, slightly modified. Let
$
\mu := \inf\{1\le n\le N :  S_{N,q}(M)  >\lambda\}$, $\nu := \inf\{1\le n\le N:  S_{N,q}(M)  >\beta\lambda\}$, and  
$\sigma := \inf\{1\le n\le N: |M_N|>\delta\lambda\mbox{ or } w_{n}>\delta\lambda\}$.
If the infimum will not be attained, we set the stopping time to $\infty$.
Let $H=\{ H_n,\, 1\le n\le N\}$ be defined by
$$H_n := \sum_{k\le n} h_k=\sum_{k\le n} 1_{\{ \mu<k\le \nu\vee \sigma\}} \, (M_{k}-M_{k-1}),\quad 0\le n\le N.
$$
Since $w$ is previsible, 
$\{ \mu<k\le \nu\vee \sigma\}\in \CF ^ M_k$, and the process $H$ is a martingale. Moreover, 
on $\{\mu=\infty\}=\{ S_{N,q}(M)  \le \lambda\}$, $H_N =0$.
On $\{0<\sigma<\infty\}$, the assumption on $w$ leads to
\del{\DEQSZ\nonumber
M_N  ^ p_{N,p}(H) \le S_{\sigma,p} ^ p (H)\le S_{\sigma,p} ^ p (M)= S_{\sigma-1,p} ^ p + (M_\sigma-M_{\sigma-1}) ^ p 
\\\label{here-what-lh}
\le  S_{\sigma-1,p} ^ p +w_\sigma ^ p \le 2\delta ^ p\lambda ^ p.
\EEQSZ}
\DEQSZ
|H_N | \le  |M_{\sigma}| \le 
|M_{\sigma-1} | + |M_\sigma-M_{\sigma-1}|  
\label{here-what-lh}
\le | M _{\sigma-1} | +w_\sigma  \le 2\delta \lambda .
\EEQSZ
This inequality can also be extended to the case where $\sigma=\infty$. Therefore,
since $E$ is a Banach space of {martingale cotype $q$, } by Definition \eqref{def-mart-ct}, 
there exists a constant $G_p(E)$ and a $0<r<\infty$ such that 
\DEQSZ\label{here-what-2-lh}
\EE S^r_{N,q}(H) \le G_q(E)\, |H_N|^r . 
\EEQSZ
Substituting \eqref{here-what-lh} in \eqref{here-what-2-lh}, we get
$$
\EE S^r_{N,q}(H)  \le G_q(E)\,  2\delta^r \lambda ^r  \PP\lk( S_{N,q}(M)  >\lambda\rk).
$$
Since $\{ S_{N,q}(M)  >\beta \lambda,\, |M_N |\vee w_N  ^ \ast \le \delta\lambda\}
\subset \{ S_{q,N}(H) >\beta \lambda-\lambda -\delta\lambda\}$, 
\DEQS
\PP\lk( S_{N,q}(M) >\beta\lambda,\, |M_N | \vee w ^ \ast _N \le \delta \lambda\rk)
&\le& \PP\lk( S_{q,N}(H) >\beta \lambda-\lambda -\delta\lambda\rk).
\EEQS
The simple version of Doob's maximal inequality, 
i.e.\ \eqref{doob-simple}, and  \eqref{here-what-2-lh} 
give
\DEQS
\PP\lk( S_{N,q}(M) >\beta\lambda,\, |M_N | \vee w ^ \ast _N \le \delta \lambda\rk)
&\le& \PP\lk(  S_{q,N}(H)  >\beta \lambda-\lambda -\delta\lambda\rk)
\\
\le  {1\over (\beta -\delta-1)^r  \lambda^r }  \EE| S^r_{q,N}(H) | 
& \le & 2\, G_q(E)\,  2{  \delta^r  \over (\beta -\delta-1)^r  } \PP\lk( S_{N,q}(M)  >\lambda\rk)  .
\EEQS
Since $\delta$ can be chosen arbitrary small, the assumptions of Lemma \ref{ausburkholder} are satisfied and
we apply the Lemma to verify the assertion.  The exact constant can be verified by using inequality \eqref{prop-phi-2} and
Definition \eqref{c-phi}.
\end{proof}

\begin{proof}[Proof  of Theorem \ref{burkholder-ineq-lh}]
The proof works in analogy to the proof of Davis, Burkholder and Gundy 
(see e.g.\ \cite[Theorem 1.1]{bdg}, or \cite[Theorem 15.1]{burkholder}).
For an $E$-valued martingale  $\{M_n\}_{n=0}^N  $ and its sequence of martingale differences
$\{m_n\}_{n=0}^N $, let $\CA_n:= \{|m_n|\le 2 m_{n-1} ^ \ast\}$, $0\le n\le N$. 
Davis introduced in \cite{davis} the following decomposition of
$M$, where $
M=G+H,
$
and $G=\{G_n\}_{n=0}^N$ and  $H=\{H_n\}_{n=0}^N   $ are defined by  
\DEQS
G_n &=& \sum_{k=1} ^ n g_k := \sum_{k=1} ^ n\lk[ y_k-\EE[y_k\mid \CF_{k-1}]\rk] ,\quad 0\le n\le N,
\\
H_n  &=& \sum_{k=1} ^ n h_k := \sum_{k=1} ^ n \lk[ z_k+\EE[y_k\mid \CF_{k-1}]\rk] ,\quad  0\le n\le N, 
\EEQS
with $y_k = m_k 1_{\CA_k}$ and $z_k = m_k 1_{\CA ^ C_k}$, $ 0\le k\le N$.
\\
\noindent
Now, since
$$
M ^ \ast_n  \le G _n ^ \ast + H ^ \ast_n , 
$$
by \eqref{prop-phi}, there exists a constant $c_\Phi<\infty$, depending only on $\Phi$, such that
\DEQSZ\label{inqqq-3-lh}
\EE\Phi( M _n ^ \ast ) \le c_\Phi \, \EE \Phi\big( G_n  ^ \ast \big) + c_\Phi\, \EE \Phi \big(   H_n  ^ \ast \big).
\EEQSZ
\noindent

First, we will investigate the last term, i.e.\ $\EE \Phi(H ^ \ast)$ and then we will investigate $\EE \Phi(G ^ \ast)$.
Observe that, since $\{m_n\}_{n=0} ^N$ is a sequence of martingale differences,
$$
\EE\lk[y_k\mid \CF_{k-1}]\rk] +\EE\lk[z_k\mid \CF_{k-1}]\rk]=\EE\lk[m_k\mid \CF_{k-1}]\rk]=0,\quad 1\le k\le N.
$$ 
Therefore, $H_n = \sum_{k=0} ^ n \lk[ z_k-\EE[z_k\mid \CF_{k-1}]\rk]$, $1\le n\le N $.
Applying  the Jensen inequality and  Lemma \ref{lemma-convex} we get
\DEQS
\EE\Phi\big( H_n ^ \ast \big)  &\le&  \EE\Phi\Big(  \sum_{k=0} ^ n \lk|  z_k-\EE[z_k\mid \CF_{k-1}]  \rk|  \Big) 
\\ &\le & 
\EE\Phi\Big(  \sum_{k=0} ^ n \lk|  z_k\rk| +  \sum_{k=0} ^ n \EE[ \lk| z_k\rk| \mid \CF_{k-1}]   \Big) \le
2\, \EE \Phi\Big( \sum_{k=0} ^ n |z_k|\Big).
\EEQS
Since
$|m_k|>2m_{k-1} ^ \ast$  implies $|m_k|<2(m ^ \ast_k-m ^ \ast_{k-1})$, 
and, hence, $|z_k|\le 2(m ^ \ast_k-m ^ \ast_{k-1})$.
Moreover, since $m_n ^ \ast = \sup_{k\le n} |m_k|=\sum_{k=1}  ^ n (m_k ^ \ast-m_{k-1} ^ \ast)$, it follows that
$\sum_{k=1} ^ n|z_k|\le 2 m _n^ \ast$. Therefore,
$$\EE\Phi( H ^ \ast _n ) \le 4\, \EE \Phi( m ^ \ast_n ),\quad 1\le n\le N.
$$
Since $\{|m_n |\}_{n=1} ^ N$ is a non negative real valued sub-martingale, we get by a generalization of
Doob's maximal inequality, i.e.\ Proposition \ref{doob}
\DEQS
\EE\Phi( H^\ast_n ) \le C\, 2\, \EE \Phi(| m_n|  ) .
\EEQS
From $m_n = M_n-M_{n-1}$, it follows that $|m_n|\le S_{n,p}(M)$, and, hence,
\DEQSZ\label{letzte-lh}
\EE\Phi( H^\ast_n ) \le C'\, 2\, \EE \Phi( S_{n,p}( M)  ),\quad 1\le n\le N.
\EEQSZ 
\\[0.3cm]
\noindent

In the next paragraph, we will give an upper estimate of the term $\EE \Phi( G _n ^ \ast )$.
Observe, first, that  $ |m_k|\le  2m_{k-1} ^ \ast$ 
implies $|g_k| \le 4m_{k-1} ^ \ast$, $ 0\le k\le N$.
This means, that $g_k$ is controlled by a $\CF_{k-1}$-measurable random variable,
and, therefore, we can apply   Proposition \ref{inqqq} to get a  control  of $G ^ \ast$.
In particular, there exists a constant $c_{\Phi,E}<\infty$, only depending on $\Phi$ and $E$, such that
\DEQSZ\label{inqqqq-2-lh}
\EE \Phi( G ^ \ast _n ) \le c_{\Phi,E}\EE \Phi( S_{n,p}(G)    )  +  c_{\Phi,E}  \EE \Phi(  m _n ^ \ast).
\EEQSZ
The term $ \EE \Phi(  m_n  ^ \ast)$ can be estimated by the generalized Doob maximal inequality \ref{doob}.
So, we obtain 
\DEQSZ\label{inqqq-ll-lh}
\EE \Phi( G ^ \ast_n  ) \le c'_{\Phi,E}\EE \Phi( S_{n,p}(G)     )  +  c'_{\Phi,E}  \EE \Phi(  m _n ).
\EEQSZ
From $m_n = M_n-M_{n-1}$, it follows that $|m_n|\le S_{n,p}(M)$. \del{we get by the tower property
\DEQSZ\label{absss-lh}\nonumber
 \EE\Phi(| m_n|  ) &\le &c_\Phi\EE \Phi(|M_n|)+c_\Phi\EE \Phi(|M_{n-1}|)
\\ &\le& c_\Phi\EE \Phi(|M_n|)+c_\Phi\EE \Phi(|\EE[M_{n}\mid \CF_{n-1}]|)\nonumber
\\ &\le& c_\Phi\EE \Phi(|M_n|)+c_\Phi\EE \Phi( \EE[ |M_{n} |\mid \CF_{n-1} ] )\le 2\,  c_\Phi\EE \Phi(|M_n|).
\EEQSZ
}
It remains to investigate $\EE \Phi( S_{n,p}(G)     )$.
Note, that
\DEQS
\EE\Phi( S_{n,p}(G) )  & \le&  c_{E,\Phi}'' \,  \EE \Phi( S_{n,p}(M)  )   +  c_{E,\Phi}''  \,
\EE \Phi(S_{n,p}(H) ).
\EEQS
Now, since \DEQS 
 \sum_{k\le n} |z_k|^p
&\le&   \sum_{k\le n} |m^\ast_k-m^\ast_{k-1}|^p
\\ &\le&   |m^\ast_n|^{p-1}\sum_{k\le n} |m^\ast_k-m^\ast_{k-1}|\le   |m^\ast_n|^p,
\EEQS
we have by Lemma \ref{lemma-convex} applied to $\{|z_n|^p\}_{n=1}^N$
$$ \EE \Phi(S_{n,p}(H) )\le C \,\EE \Phi\big( m_n ^\ast\big),\quad 1\le n\le N.
$$
Again applying Proposition \ref{doob} leads to
\DEQSZ\label{llllll-lh}
\EE\Phi( S_{n,p}(G) )  & \le&  c''_{E,\Phi} \,  \EE \Phi( S_{n,p}(M)  )+ c''' _{E,\Phi} \EE \Phi\big( |m_n |\big)
\le c''''_{E,\Phi} \,  \EE \Phi( S_{n,p}(M)  ).
\EEQSZ
\del{ \\ &\le&  c'_{E,\Phi}\,   \EE \Phi(  S_{n,p}(M)   )   +  c'_{E,\Phi} \,\EE\Phi\big(H_n ^ \ast \big) ,\quad 1\le n\le N ,
\EEQS}
Therefore, substituting \eqref{letzte-lh}, \eqref{inqqq-ll-lh} and \eqref{llllll-lh} in \eqref{inqqq-3-lh} 
 gives 
\DEQSZ\label{inqqq-4-lh}
\EE\Phi( M ^ \ast _n) \le  \bar c_{E,\Phi}    \EE \Phi(S_{n,p}(M)   )   ,\quad 1\le n\le N.
\EEQSZ
\del{\\[0.3cm]
Finally, using the convexity of $\Phi$ by applying Lemma \ref{lemma-convex}, we can control the
last term on the RHS of \eqref{inqqq-4}. To be more precise, \eqref{prop-phi} gives

\DEQS
 \EE \phi \Big(  \sum_{k=0} ^ n |h_k|\Big) &\le &\, \EE \Phi\lk( \sum_{k=0} ^ n | z_k+\EE [y_k\mid \CF^M_{k-1} ] | \rk)
\\ 
& \le & c_\Phi \,\EE \Phi\lk( \sum_{k=0} ^ n | z_k| \rk)+ c_\Phi \, \EE \Phi\lk( \sum_{k=0} ^ n | \EE [y_k\mid \CF^M_{k-1}] | \rk).
\EEQS}
\end{proof}

\section{comparison between small and large $s$}

\cite{burkholdergundy} p.275, 276, Theorem 5.3 !!

\section{What happens with $0<p<1$}

\cite{burkholdergundy} p. 257
\begin{lemma}
Let $0<p\le 2$. Then there exists a constant $C$ such that we have for any
martingale $\{M_n|\}_{n=0}^N$,
$$
\EE\sup_{0\le n\le N} |M_n|^p \le C\, \EE\lk( \sum_{n=0}^N \EE[ |M_n-M_{n-1}|^2\CF_{n-1} ]\rk)^\frac p2.
$$
\end{lemma}

Laurent schwartz   p radonifying asteriques.

\section{Up and Down-crossing of Martingales}

\cite{burkholderLN} p. 23
On the number of escapes of a martingale

\section{Shrap exponential inequality}

\cite{burkholderLN} p. 35
On the number of escapes of a martingale

}

\del{
\section{Banach spaces and Martingales}

In this section, first, we summarize same basic results of type  and cotype properties of Banach spaces.
Then we define the Banach space which are unconditional for martingale differences
(short UMD) and finally we point out Banach spaces of martingale type.
Moreover, throughout this chapter $B$ denotes a Banach space and $(\Omega,\CF,\PP)$ be any probability space.

\subsection{Banach spaces of type $p$ and cotype $q$}

Here and hereafter $\{\ep_i\}_{i=0}^\infty$ denotes a sequence of independent real valued random variables
with $\PP(\ep_i=1)=\PP(\ep_i=-1)=\frac 12$.

\del{If $\{ x_i\}\subset E$, then we say that the sum $\sum_i \ep_i x_i$ exists, iff there exists a $r>0$ such that 
$$\EE \lk(\sum_i \ep_i x_i\rk)^r<\infty.
$$}
There exists two notations of type,  stable type and Rademacher type. Since
they are nearly equivalent, and coincides in case $p=2$,
we  restrict ourselves to the Rademacher type and
call this property just {\em type}.

The notation of type and cotype Banach spaces were introduced by e.g.\ Hofmann-Jorgensen and Pisier, Maurey and Pisier and Pisier.
Hofmann-Jorgensen and Pisier introduced the notation to investigate 

\begin{definition}
A Banach space $B$ is of {\em Rademacher type $p$} (R-type $p$), \del{ iff $\{x_i\}_{i=0}^\infty \in l^p(E)$ implies
that $\sum_i \ep_i x_i$ exists. 
In other words,} iff there exists a constant $C$ such that for all finite sequences 
$\{x_i\}_{i=0}^N$ in $B$ 
\DEQSZ \label{eq:typ} \EE \Big| \sum_{i=0}^N \ep_i x_i\Big| \le C\, \lk( \sum_{i=0}^N |x_i|^p \rk)^\frac 1p.
\EEQSZ
We denote by $T_p(B)$ the smallest constant for which \eqref{eq:typ} exists.

Conversely, $B$ is of {\em Rademacher cotype $q$}  (R-cotype $q$), iff
there exists a constant $C$ such that for all finite sequences 
$\{x_i\}_{i=0}^N$ in $B$ and a number $r>0$  
\DEQSZ \label{eq:cotyp} \lk( \sum_{i=0}^N |x_i|^q \rk)^\frac 1q \le C\,  \EE \Big| \sum_{i=0}^N \ep_i x_i\Big|.
\EEQSZ
We denote by $C_p(B)$ the smallest constant for which \eqref{eq:typ} exists.
\end{definition}
\begin{remark}
If $p<2$, then if $B$ is of type $p$, then it is of stable type $q$ for $q<p$.
\end{remark}
This properties are quite natural properties, e.g. for  $1\le p\le 2$ any $L^p$-space is of type $p$ (Schwartz 
\cite[Theorem 11.3-11.5]{schwartzLNM}) and cotype $2$,
and for $2\le q<\infty$ any  $L^q$ space is
of type  $2$ and  cotype $q$. 
Moreover, the Minkowski inequality gives that every normed space $X$ is of type $1$ with $T_1(X)=1$.
Clearly every normed space $X$ is of cotype $\infty$, also with $C_\infty=1$.
On the other hand it follows by the Khinthin inequality that no non-zero normed space has type $p$ whenever $p>2$.
Type $p$ implies type $r$ for $r\le p$ and cotype $q$ implies cotype $r$ for $r\ge q$.
A Banach space is of type $2$ and cotype $2$ iff it is isomorphic to a Hilbert space 
(\cite{kwapien72})
When $B$ is of type $p$, then the dual is of cotype $q$ for the conjugate exponent (\cite[p. 1320, Chapter 6]{maurey:03}, or HJ74 (preprint before) and Maurey).
Note, that the converse is not true. the space $l_1$ has cotype $2$, but $c_0$, its dual,
has no non-trivial type. 
In fact, the converse is true, if $B$  is K-convex, a property which was introduced by \cite{maureyPisier76} (Pisier1206, p.206).
Later on, Pisier \cite{pisier82}
has shown  that if $B^\ast$ has cotype $q$ and non trivial type, then, $B$ has type $p$, where $p$ is 
conjugate to $q$.

Note that the norm with  respect to the Rademacher sequence in irrelevant. This follows by the Kahane
inequality, which states that for every $0<q<\infty$ there exists a constants $K^1_q$ and $K^2_q$ such that
for all finite sequences 
$\{x_i\}_{i=0}^N$ in $B$ 
\DEQSZ \label{eq:kahane}K_q^1\, \lk(  \EE \Big| \sum_{i=0}^N \ep_i x_i\Big|^2\rk)^\frac 12\le
 \lk( \EE  \sum_{i=0}^N \Big|\ep_i x_i\Big|^q \rk)^\frac 1q\le K_q^2\, 
\lk(  \EE \Big| \sum_{i=0}^N \ep_i x_i\Big|^2\rk)^\frac 12.
\EEQSZ
If $B=\RR$, then  the inequality \eqref{eq:kahane} reduces to the Khinthin inequality. 
By means of the Kahane inequality, we can derive the following Proposition.
\del{\begin{proposition}(\cite[Proposition 3.1, for martingale p.221]{pisier-1206}) 
Let $E$ be a Banach space of type $p$ and cotype $q$.
Then there exists constants $c,C$ such that for any 
$E$-valued  finite sequence of independent random variables $\{x_{n}\}_{n=0}^N$ and any $0<r<\infty$ the
following inequality holds
\begin{equation} c  \mathbb{E} \lk( \sum_n | x _n  | ^{q}\rk)  ^ \frac rq
\le  \mathbb{E} | \sum_n  x _n   | ^{r} \le  K_q
 \mathbb{E}  \lk( \sum_{n=0}^N  | x_n  | ^{p}\rk) ^ \frac rp,
\label{eqn-2.1}\end{equation}
 where as  usually, we put  $M_{-1}=0$.
\end{proposition}}

\begin{proposition}(\cite[Proposition 3.5.1]{Linde})
The following holds
\begin{itemize}
\item
The Banach space $E$ has R type $p$ iff for some (each) $r\in(0,\infty)$
there exists a constant $c>0$ such that 
$$
\lk\{ \EE \lk| \sum_{i=0}^n\ep_i x_i\rk| ^r \rk\}^\frac 1r \le \lk\{ \EE  \sum_{i=0}^n\lk| x_i\rk| ^p \rk\}^\frac 1p
$$
for all $x_1,\ldots , x_n\in E$.
\item
The Banach space $E$ has R cotype $q$ iff for some (each) $r\in(0,\infty)$
there exists a constant $c>0$ such that 
$$
 \lk\{ \EE  \sum_{i=0}^n\lk| x_i\rk| ^q \rk\}^\frac 1q\le \lk\{ \EE \lk| \sum_{i=0}^n\ep_i x_i\rk| ^r \rk\}^\frac 1r \le
$$
for all $x_1,\ldots , x_n\in E$.
 \end{itemize}
\end{proposition}

\begin{example}
Suppose $E=L^p(U,\nu)$. Then $E$ is of R type $p$.
that $0\le q<p\le 2$ and let $E$ be a Banach space
of type  
\end{example}

Araujo and Gine \cite{araujo} investigated the relationship between Banach spaces of type $p$ and 
the characterization of L\'evy measures. E.g. they proved that a Banach space is of cotype $2$ iff every L\'evy measure 
$\mu$ on $B$ satisfies (\cite[Theorem 2.2]{araujo})
$$ \int \min( 1,|x|^2) \nu(dx)<\infty.
$$
If $p<2$ only a weaker version holds. A Banach space is of type $p$ iff every L\'evy measure 
$\mu$ on $B$ integrates $  \min( 1,|x|^2) $  and gives mass zero to $\{0\}$ (\cite[Theorem 2.3]{araujo}). 
Relation to the L\'evy khintchin formula also see Linde, Dettweiler.

\subsection{Banach spaces which are unconditional for martingale differences}

By means of the type $p$ and cotype $q$ property, one can define the stochastic integral, but only for 
deterministic integrand. To define a stochastic integral with respect to an integrand, which
is itself random, one have to know that the underlying Banach spaces is unconditional for martingale. 

\begin{definition}
A Banach space $E$ is UMD, iff for any $1<p<\infty$
there exists a constant $\beta_{p}(E)>0$ such that for all
$E$-valued  finite martingale $\{M_{n}\}_{n=0}^N$ and any Rademacher sequence $ \{\ep_{n}\}_{n=0}^N$ the
following inequality holds
\DEQS \mathbb{E} \Big|\sum_{n=0}^N \ep_n( M_{n}-M_{n-1})  \Big|_E ^{p}   
\le  \beta_{p}(E)\,
 \EE\Big|\sum_{n=0}^N  M_{n}-M_{n-1}  \Big|_E ^p ,
\EEQS 
 where as  usually, we put  $M_{-1}=0$.
\end{definition}

This property was of great interest and is closely related to questions investigated by ...
(burkholder, AP 9,1981).
An important fact is that it is equivalent to a convexity property, i.e.
if $B$ is UMD, then it is B convex. For a survey on this topic we refer to Burkholder 1206.
or Burkholder AoP 9,6 \cite{burkholder81} here he proofs Mt UMD

If $B$ is a real or complex Hilbert space, then $\beta_p(B)=p^\ast -1$ for all $p\in(0,\infty)$,
where $p^\ast=p\vee q$, $q$ conjugate to $p$. The spaces $l^1$, $l^\infty$, $L^1(0,1)$ and $L^\infty(0,1)$ are not UMD. In fact, UMD spaces are reflexive
even super reflexive (Remark 4 Maurey 1975), Aldous 1979.

Burkholder proofed that UMD is equivalent to $\xi$ convexity.
AOP 1981

\subsection{Martingale type $p$ and Martingale cotype $p$}

In this section we collect some basic informations about the     martingale type $p$, $p\in [1,2]$, Banach spaces.
For more details we refer to \cite[Appendix C]{maxreg} or \cite{MR1313905}.

A property which encompasses both, the UMD property and the type $p$ property is the martingale type $p$ property.

\begin{definition}\label{def-mart}
Assume that $p\in [1,2]$ is fixed. A Banach space $E$ is of  martingale type $p$  iff
there exists a constant $L_{p}(E)>0$ such that for all
$E$-valued  finite martingale $\{M_{n}\}_{n=0}^N$  the
following inequality holds
\DEQS \sup_{0\le n\le N } \mathbb{E} | M_{n} |_E ^{p} \le  L_{p}(E)\,
\mathbb{E}  \sum_{n=0}^N  | M_{n}-M_{n-1} |_E ^{p},
\EEQS 
 where as  usually, we put  $M_{-1}=0$.
\end{definition}

Similarly one can define the martingale cotype property.

\begin{definition}\label{def-mart-ct}
Assume that $q\in [2,\infty ]$ is fixed. A Banach space $E$ is of  martingale cotype $q$  iff
there exists a constant $C_{p}(E)>0$ such that for all
$E$-valued  finite martingale $\{M_{n}\}_{n=0}^N$  the
following inequality holds
\DEQS 
\mathbb{E}  \sum_{n=0}^N  | M_{n}-M_{n-1} |_E ^{q}\le  C_{q}(E)\,
\sup_{0\le n\le N } \mathbb{E} | M_{n} |_E ^{q}  ,
\EEQS 
 where as  usually, we put  $M_{-1}=0$.
\end{definition}

\begin{remark}
Another terminus which is related to the martingale type property is the uniform smoothness of a Banach space.
This is a purely geometric condition,  and existed before the  martingale type property.
To be more precise, Pisier and Woyzz,,, \cite[Propsotion 2.4, p.\ 336]{Pisier75} have shown that every $p$-uniformly smooth Banach space is of martingale
type $p$.
Note, that there exists Banach spaces which are of type $p$, but not of martingale type $p$.
R. James constructed an example of a Banach space which
is of type $p$ but not of martingale type $p$ uniformly (w... p. 253)

Similar definition can also be found by Pisier, chapter 6, LN 1206 and Wyczynski LN 472, 1975.
Woyzinski has shown, that a Banach space is of martingale type $p$ iff the Banach spaces is $p$-uniformly smooth.
(Theorem 2.2, p. 251)
\end{remark}Preliminares about conv

\subsection{Sharpness of the inequalities}

to go to the limit: Burkholder AOP 15,1

Hilbert spaces the best constant \cite{burkholder2002}
for $p=1$ and $1>p\le 2$ for the quadtratic variation (RHS).
}
\section{Preliminaries about convex functions}
\label{appendix-convex-function}

Let $\Phi:[0,\infty)\to \RR$ be a strictly increasing convex function.
By e.g.\ \cite[Theorem A]{convex-sum} it follows that there exists a function $\phi:\RR_+\to \RR_+$, where $\phi$ is strictly increasing such that
$$\Phi(t) = \int_0^t \phi(s)\, ds, \quad t\ge 0.
$$
To such a convex function $\Phi$ we can associate another convex function $\Psi$
of the same type such that
$$\Psi(t) = \int_0^t \psi(s)\, ds, \quad t\ge 0.
$$
and $\phi(s)=\inf\{ t\ge 0: \psi(t) \ge s\}$ and $\psi(s)=\inf\{ t\ge 0: \phi(t) \ge s\}$, $s\ge 0$.
(see e.g.\ \cite[Chapter I.15]{convex-sum})

Particulary, the following holds (see e.g.\ \cite[Chapter I.15, p.30]{convex-sum})
\begin{proposition}
Let $\Phi:[0,\infty)\to \RR$ be a strictly increasing convex function and $\Psi:[0,\infty)\to \RR$ its conjugate.
Then
\DEQSZ u\,\phi(u) &= &\Phi(u)+\Psi(\phi(u)),\label{fig:01}
\\ \int_0^v t \, d\psi(t) &=& \Phi(\psi(v)),\label{fig:02}
\\ u\, v &\le & \Phi(u) +\Psi(v),\label{fig:03}
\\ \Phi(ua)  &\le& a \Phi(a),\quad \forall 0<a\le 1,\label{fig:04}
\EEQSZ
\end{proposition}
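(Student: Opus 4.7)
The plan is to establish the four assertions using the Legendre--Fenchel duality between $\Phi$ and $\Psi$, which is encoded in the fact that $\phi$ and $\psi$ are (generalized) inverses of each other: $\phi(s)=\inf\{t\ge 0:\psi(t)\ge s\}$. Under the standing assumption that $\Phi,\Psi$ are strictly increasing convex with $\Phi(0)=\Psi(0)=0$, the functions $\phi,\psi$ are also strictly increasing, so on the set where both are continuous we really do have $\phi=\psi^{-1}$.

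For \eqref{fig:01} I would give a geometric/Fubini argument. Write
\[
u\,\phi(u)=\int_{[0,u]\times[0,\phi(u)]} 1\,dt\,ds=\int_0^{\phi(u)}|\{t\in[0,u]:\phi(t)\le s\}|\,ds+\int_0^u \phi(t)\,dt,
\]
where the second term is $\Phi(u)$ and the first splits the rectangle $[0,u]\times[0,\phi(u)]$ into the subgraph of $\phi$ and its complement. Using $\phi(t)\le s\iff t\le \psi(s)$ and the fact that on $[0,\phi(u)]$ we have $\psi(s)\le u$, the first integral equals $\int_0^{\phi(u)}\psi(s)\,ds=\Psi(\phi(u))$, which yields the identity.

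For \eqref{fig:02} I would simply substitute $s=\psi(t)$ in the Stieltjes integral: since $\phi=\psi^{-1}$, one has $t=\phi(s)$, and the bounds transform as $0\mapsto 0,\ v\mapsto \psi(v)$, so
\[
\int_0^v t\,d\psi(t)=\int_0^{\psi(v)}\phi(s)\,ds=\Phi(\psi(v)).
\]
For \eqref{fig:03} (Young's inequality) I would fix $u$ and consider $F(v):=\Phi(u)+\Psi(v)-uv$. Since $F'(v)=\psi(v)-u$, the minimum is attained at $v_\ast=\phi(u)$, and at this point \eqref{fig:01} gives $F(v_\ast)=\Phi(u)+\Psi(\phi(u))-u\phi(u)=0$. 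Hence $F(v)\ge 0$ for all $v\ge 0$, which is exactly \eqref{fig:03}. Finally, \eqref{fig:04} (read as $\Phi(au)\le a\Phi(u)$ for $0<a\le 1$) is immediate from convexity combined with $\Phi(0)=0$, via $\Phi(au)=\Phi(au+(1-a)\cdot 0)\le a\Phi(u)+(1-a)\Phi(0)=a\Phi(u)$.

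The only genuine subtlety, and the step I would worry about most, is the rigorous justification of the substitution in \eqref{fig:02} and of the inverse relationship underlying \eqref{fig:01} when $\phi$ (resp.\ $\psi$) has flat pieces or jumps; in that regime one has to interpret integrals against $d\psi$ as Lebesgue--Stieltjes integrals, absorb the jump contributions into the definition of the generalized inverse, and verify that the Fubini decomposition of the rectangle $[0,u]\times[0,\phi(u)]$ remains exact. Once this bookkeeping is in place, all four assertions follow from the single geometric identity in \eqref{fig:01}.
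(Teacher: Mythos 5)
Your proof is correct, but note that the paper itself does not prove this proposition at all: it is quoted from Roberts and Varberg \cite[Chapter I.15]{convex-sum}, so there is nothing in the text to compare against except the citation. Your argument supplies a genuine, self-contained proof: the Fubini decomposition of the rectangle $[0,u]\times[0,\phi(u)]$ for the first identity is the classical picture and is sound (the overlap of the two regions is the graph $\{s=\phi(t)\}$, which is Lebesgue-null, and since $\phi$ is strictly increasing one indeed has $\psi(s)\le u$ for $s\le\phi(u)$); Young's inequality then follows by your convex minimization of $v\mapsto\Phi(u)+\Psi(v)-uv$; and the last inequality is immediate from convexity and $\Phi(0)=0$ exactly as you say (the printed right-hand side $a\Phi(a)$ is a typo for $a\Phi(u)$, and you prove the intended statement). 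The one place where you rightly flag a subtlety -- the change of variables in $\int_0^v t\,d\psi(t)$ when $\psi$ has flat pieces or jumps -- can be bypassed entirely: integrate by parts to get $\int_0^v t\,d\psi(t)=v\psi(v)-\int_0^v\psi(t)\,dt=v\psi(v)-\Psi(v)$, and then apply your first identity with the roles of $(\Phi,\phi)$ and $(\Psi,\psi)$ interchanged, which gives $v\psi(v)=\Psi(v)+\Phi(\psi(v))$; this reduces \eqref{fig:02} to \eqref{fig:01} with no substitution needed. With that replacement your write-up is complete and needs no further bookkeeping.
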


Furthermore,
we say $\Phi$ satisfies {\em the growth condition},
iff  there exists a constant $c_\Phi$ with
$$\Phi( 2\lambda)\le c_\Phi\,  \Phi(\lambda),\quad \lambda\in [0,\infty).
$$
Since we will need it later, we summaries in this paragraph some facts about convex functions (see e.g.\ \cite[Appendix]{kras} or \cite{convex-sum}). 
For any  increasing convex and continuous function $\Phi$,  there exists an increasing, non-negative
function $\phi:[0,\infty)\to [0,\infty)$ with $\phi(0)=0$ such that $\Phi(t) = \int_0^t \phi(s)\, ds$.
We can associate to $\Phi$ a function $\Psi$, where $\Psi(t)= \int_0^t \psi(s)\, ds$
and 
$\psi(s)= \sup\{ t: \phi(t)\le s\}$ for all $s>0$. Such a function is called conjugate to $\Phi$  in the sense of Young.
If the growth condition holds, then 
\DEQSZ\label{c-phi} c_\Phi ^ \ast := \sup_{u>0}{u\phi(u)\over \Phi(u)}
\EEQSZ
is finite and we get 
\DEQSZ\label{prop-phi}
 \Phi( t_1\vee t _2) &\le & \Phi( t_1)+\Phi(t_2), \quad t_1,t_2\ge 0,
\\\Phi( r t ) &\le& r ^ {c_\Phi ^ \ast } \Phi(t), \quad t\ge ,\, r\ge 1,\label{prop-phi-2}
\\
\Psi(t) &\le& (c_\Phi ^ \ast -1)\, \Phi(\psi(t)),\quad t\ge 0
.\label{convex}
\EEQSZ

\del{
\section{References}

Uniform convexity: introduced by Clarkson 1936 in transamer math soc. 40

Examples: Milman: $L ^ p$ spaces $p\ge 1$. uniformly convex 
1971, uspekhi mat. naut 26 -6 

uniform smoothness day 1 : 
}
\def\cprime{$'$} \def\cprime{$'$} \def\cprime{$'$} \def\cprime{$'$}

\end{document}